\newcommand\numberthis{\addtocounter{equation}{1}\tag{\theequation}}
\newcommand{\D}{\operatorname{\mathbb{D}}}
\newcommand{\Ds}{\operatorname{\mathscr{D}}}
\newcommand{\Rs}{\operatorname{\mathscr{R}}}
\newcommand{\N}{\operatorname{\mathbb{N}}}
\newcommand{\R}{\operatorname{\mathbb{R}}}
\newcommand{\Z}{\operatorname{\mathbb{Z}}}
\newcommand{\C}{\operatorname{\mathbb{C}}}
\newcommand{\Cal}{\operatorname{\mathcal{C}}}
\newcommand{\B}{\operatorname{\mathcal{B}}}
\newcommand{\A}{\operatorname{\mathcal{A}}}
\newcommand{\Fil}{\operatorname{\mathcal{F}}}
\newcommand{\hil}{\operatorname{\mathcal{H}}}
\newcommand{\e}{\operatorname{\varepsilon}}
\newcommand{\ol}{\overline }
\let\phi\varphi
\newtheorem{lemma}{Lemma}[section]
\newtheorem{theorem}[lemma]{Theorem}
\theoremstyle{definition}
\begin{document}
\author{Rapha\"el Clou\^atre}
\address{Department of Pure Mathematics, University of Waterloo, 200 University Avenue West,
Waterloo, Ontario, Canada N2L 3G1} \email{rclouatre@uwaterloo.ca}
\title[Completely bounded isomorphisms]{Completely bounded isomorphisms of operator algebras and similarity to complete isometries}
\subjclass[2010]{Primary:47L30, 46L07, 47L55}
\keywords{completely bounded isomorphisms, complete isometries, similarity, operator algebras}
\begin{abstract}
A well-known theorem of Paulsen says that if $\A$ is a unital operator algebra and $\phi:\A\to B(\hil)$ is a unital completely bounded homomorphism, then $\phi$ is similar to a completely contractive map $\phi'$. Motivated by classification problems for Hilbert space contractions, we are interested in making the inverse $\phi'^{-1}$  completely contractive as well whenever the map $\phi$ has a completely bounded inverse. We show that there exist invertible operators $X$ and $Y$ such that the map
$$
XaX^{-1}\mapsto Y\phi(a)Y^{-1}
$$
is completely contractive and is ``almost" isometric on any given finite set of elements from $\A$ with non-zero spectrum.  Although the map cannot be taken to be completely isometric in general, we show that this can be achieved if $\A$ is completely boundedly isomorphic to either a $C^*$-algebra or a uniform algebra. In the case of quotient algebras of $H^\infty$, we translate these conditions in function theoretic terms and relate them to the classical Carleson condition.
\end{abstract}
\maketitle

\section{introduction}
A classical theorem of Sz.-Nagy (see \cite{nagy1947}) states that an invertible operator $T\in B(\hil)$ satisfying 
$$
\sup_{n\in \Z}\|T^n\|<\infty
$$
must be similar to a unitary. This was generalized by Dixmier (see \cite{dixmier1950}) to the setting of representations of amenable groups. A few years later, it was shown in \cite{nagy1959} again by Sz.-Nagy that any compact operator $T$ satisfying 
$$
\sup_{n\in \N}\|T^n\|<\infty
$$
must be similar to a contraction.  It was then natural to ask whether the conclusion still holds without the compactness assumption. This was shown to be incorrect by Foguel in \cite{foguel1964}.
In light of the von Neumann inequality for contractions, Halmos reformulated the problem as follows: is any polynomially bounded operator similar to a contraction? Recall that $T$ is said to be polynomially bounded if there exists some constant $C>0$ such that
$$
\|p(T)\|\leq C \|p\|_{\infty}
$$
for every polynomial $p$. This question turned out to be deeper and remained open for some time despite attracting significant interest (see \cite{aleksandrov1996}, \cite{bourgain1986}, \cite{carlson1994}, \cite{petrovic1992} among others) until Pisier proved that the answer is negative: there exists a polynomially bounded operator which is not similar to a contraction (see \cite{pisier1997}). In fact, what Pisier showed was that a certain operator is polynomially bounded but not \textit{completely} polynomially bounded. This distinction turned out to be the key insight  into this problem, and it was brought to light by the following result of Paulsen. The original statement can be found in \cite{paulsen1984}, but the improved version we state here is from \cite{paulsen1984PAMS}.

\begin{theorem}\label{t-paulsensim}
Let $\A$ be a unital operator algebra and $\phi:\A\to B(\hil)$ be a unital completely bounded homomorphism. Then, there exists an invertible operator $X$ with $\|X\|^2= \|X^{-1}\|^2=\|\phi\|_{cb}$  such that the map $a\mapsto X\phi(a)X^{-1}$ is completely contractive.
\end{theorem}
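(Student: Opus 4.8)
The plan is to deduce the statement from the representation theorem for completely bounded maps (Wittstock, Haagerup, Paulsen). Regarding $\A$ as a unital subalgebra of $B(\kil_0)$ for some Hilbert space $\kil_0$ and setting $\B=C^*(\A)$, that theorem furnishes a Hilbert space $\kil$, a unital $*$-representation $\pi:\B\to B(\kil)$, and bounded operators $V_1,V_2:\hil\to\kil$ with
\[
\phi(a)=V_1^*\pi(a)V_2\qquad(a\in\A)
\]
and $\|V_1\|\,\|V_2\|=\|\phi\|_{cb}$; rescaling $V_1\mapsto\lambda V_1$, $V_2\mapsto\lambda^{-1}V_2$, I may assume $\|V_1\|=\|V_2\|=\|\phi\|_{cb}^{1/2}$. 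The whole point is then to use, on top of complete boundedness, that $\phi$ is a \emph{unital homomorphism}: unitality forces $V_1^*V_2=\phi(1)=I_\hil$, and multiplicativity of $\phi$ (together with that of $\pi$) enters the next step.

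I would then pass to the closed subspace $\kil_0:=\overline{\operatorname{span}}\{\pi(a)V_2\xi:a\in\A,\ \xi\in\hil\}\subseteq\kil$. Because $\A$ is an algebra, $\kil_0$ is invariant for $\pi(\A)$, and it contains $V_2\hil$ since $1\in\A$. As $\pi$ is a $*$-homomorphism, its restriction to $\kil_0$ — and any compression thereof — is a completely contractive homomorphism of $\A$. The key observation is that $V_1^*$, cut down to $\kil_0$ and viewed as a map into $\hil$, intertwines $a\mapsto\pi(a)|_{\kil_0}$ with $\phi$: for $a,b\in\A$ and $\xi\in\hil$,
\[
V_1^*\pi(a)\bigl(\pi(b)V_2\xi\bigr)-\phi(a)\,V_1^*\bigl(\pi(b)V_2\xi\bigr)=\phi(ab)\xi-\phi(a)\phi(b)\xi=0,
\]
using only multiplicativity of $\pi$ and $\phi$; density and boundedness then give $V_1^*\pi(a)\eta=\phi(a)V_1^*\eta$ for all $\eta\in\kil_0$. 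Furthermore $V_1^*$ maps $\kil_0$ \emph{onto} $\hil$, since $V_1^*V_2\xi=\xi$ for every $\xi$.

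The obstacle I anticipate is that $V_1^*|_{\kil_0}$ is so far only a surjective intertwiner and need not be injective, so it is not yet a similarity. To remedy this I would check, straight from the intertwining relation, that $\mathcal{L}:=\kil_0\cap\ker V_1^*$ is again $\pi(\A)$-invariant; then $\kil_0\ominus\mathcal{L}$ is a semi-invariant subspace for $\pi(\A)$, so the compression $\tilde\rho(a):=P_{\kil_0\ominus\mathcal{L}}\,\pi(a)\big|_{\kil_0\ominus\mathcal{L}}$ is a unital completely contractive homomorphism of $\A$, and $V_1^*|_{\kil_0}$ factors through $\kil_0\ominus\mathcal{L}$ as a \emph{bounded bijection} onto $\hil$. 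Identifying $\kil_0\ominus\mathcal{L}$ with $\hil$ through a unitary and letting $X\in B(\hil)$ correspond to the inverse of that bijection, one gets $X\phi(a)X^{-1}=\tilde\rho(a)$, which is completely contractive.

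Finally the norm identity is bookkeeping. Under the identification, $X^{-1}$ corresponds to the map induced by $V_1^*$, so $\|X^{-1}\|\le\|V_1\|=\|\phi\|_{cb}^{1/2}$, while $X\xi$ is the class of $V_2\xi$ (by $V_1^*V_2=I_\hil$), so $\|X\|\le\|V_2\|=\|\phi\|_{cb}^{1/2}$; on the other hand $\phi(a)=X^{-1}\tilde\rho(a)X$ with $\tilde\rho$ completely contractive forces $\|\phi\|_{cb}\le\|X^{-1}\|\,\|X\|$. The three estimates together give equality everywhere, i.e.\ $\|X\|^2=\|X^{-1}\|^2=\|\phi\|_{cb}$. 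The two substantive ingredients are thus the completely bounded structure theorem in its sharp-norm form and checking the invariance of $\mathcal{L}$ — that is, upgrading the intertwiner to an honest two-sided similarity.
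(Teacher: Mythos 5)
The paper offers no proof of Theorem \ref{t-paulsensim}: it is quoted directly from \cite{paulsen1984} and \cite{paulsen1984PAMS}. Your argument is correct and is essentially the argument of those references --- the sharp-norm representation theorem $\phi(a)=V_1^*\pi(a)V_2$ (applied after the implicit Wittstock extension of $\phi$ to $C^*(\A)$), restriction to the $\pi(\A)$-invariant subspace generated by $\pi(\A)V_2\hil$, invariance of the kernel of $V_1^*$ there so that $\kil_0\ominus\mathcal{L}$ is semi-invariant and the compression is a unital completely contractive homomorphism, and the open mapping theorem to upgrade the surjective intertwiner to a similarity; the concluding norm bookkeeping, where the two upper bounds $\|X\|\le\|V_2\|$ and $\|X^{-1}\|\le\|V_1\|$ are forced into equalities by $\|\phi\|_{cb}\le\|X\|\,\|X^{-1}\|$, is also sound.
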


The problem we wish to address in this paper is related to this theorem. We are interested in the case where the homomorphism $\phi$ has a completely bounded inverse. We call such a map a \emph{completely bounded isomorphism}. This leads us to examine the behaviour and size of the norm of the inverse of the completely contractive map from Theorem \ref{t-paulsensim}.  Our motivation stems from the problem of classification of Hilbert space contractions through isomorphisms of some associated non-self-adjoint operator algebras such as the commutant (see \cite{clouatreSIM3}  and \cite{clouatre2013} for further details on this idea).

We make a few elementary comments before proceeding further. Assume that $\phi:\A\to \B$ is a unital completely bounded isomorphism. Then, the map 
$$
b\mapsto \|\phi^{-1}(b)\|
$$
gives rise to a new norm on $\B$ which is equivalent to the original one and which defines a new operator algebra structure by the Blecher-Ruan-Sinclair theorem (see \cite{blecher1990}). In particular, if we denote by $\B'$ this new operator algebra and by $\theta$ the inclusion of $\B$ into $\B'$, then $\theta$ and its inverse are completely bounded, and the map $\theta\circ \phi$ is completely isometric.  Our goal is to replace $\theta$ with spatial isomorphisms, that is conjugation by invertible operators from the ambient $B(\hil)$.

The plan of the paper is as follows. In Section 2,  we establish our most general positive result which says that if  $\phi:\A\to \B$ is a unital completely bounded algebra isomorphism, then there exist two invertible operators $X$ and $Y$ such that the map
$$
XaX^{-1}\mapsto Y\phi(a)Y^{-1}
$$
 is completely contractive and its inverse is ``almost" isometric on any given finite set of elements from $\A$ with non-zero spectrum  (we actually obtain a more general result, see Theorem \ref{t-semisimple} for the precise statement). We also show that we can obtain a complete isometry when the algebra $\A$ is completely boundedly isomorphic to either a $C^*$-algebra (Theorem \ref{t-c*}) or a uniform algebra (Theorem \ref{t-unif}). In Section 3 we exhibit a finite dimensional commutative algebra $\A$ and a unital completely bounded isomorphism on it for which this cannot be done, thus illustrating that Theorems  \ref{t-unif} and \ref{t-c*} do not hold in general. In Section 4 we make a connection with our motivating problem of classifying $C_0$ contractions by giving function theoretic interpretations of these Banach and operator algebraic conditions for quotients algebras of $H^\infty$ (see Theorem \ref{t-equivalencecarleson}). Finally, we end the paper in Section 5 with a discussion of possible further results.
 
 \emph{Acknowledgements:} The author wishes to thank Ken Davidson for his interest in the problem considered in this paper and for many stimulating conversations. He is also grateful to the referee for his careful reading of the paper and for a remark that improved the original statement of Theorem \ref{t-semisimple}.

\section{Positive results}
This section contains our main results. The first order of business is to prove the following lemma which is central to our approach for proving Theorem \ref{t-semisimple}.

\begin{lemma}\label{l-xnyn}
Let $\A\subset B(\hil_1)$ and $\B\subset B(\hil_2)$ be  unital operator algebras and $\phi:\A\to \B$ be a unital completely bounded isomorphism. Then, there exist invertible operators $X_n, Y_n$ such that
\begin{align*}
\|(a_{ij})\|
&\geq \|(Y_n \phi(a_{ij}) Y_n^{-1})\|\geq \|(X_n a_{ij} X_n^{-1})\|\geq \|(Y_{n+1} \phi(a_{ij}) Y_{n+1}^{-1})\|
\end{align*}
for every $(a_{ij})\in M_d(\A)$ and $d,n\in \N$. Moreover,  we have that 
$$
\|Y_{n+1}\|^2= \|Y_{n+1}^{-1}\|^2=\|X_n a X_n^{-1}\mapsto \phi(a)\|_{cb}
$$
and
$$
\|X_{n}\|^2= \|X_{n}^{-1}\|^2=\|Y_n \phi(a) Y_n^{-1}\mapsto a\|_{cb}
$$
for every $n\in \N$.
\end{lemma}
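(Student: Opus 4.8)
The plan is to construct the operators $X_n$ and $Y_n$ by repeatedly invoking Paulsen's similarity theorem (Theorem \ref{t-paulsensim}), alternately ``straightening'' the map $\phi$ and its inverse so that the successive conjugated maps become completely contractive. As a base case, I would apply Theorem \ref{t-paulsensim} to the unital completely bounded homomorphism $\phi:\A\to\B\subset B(\hil_2)$ to obtain an invertible $Y_1$ for which $a\mapsto Y_1\phi(a)Y_1^{-1}$ is completely contractive; this already yields $\|(a_{ij})\|\geq\|(Y_1\phi(a_{ij})Y_1^{-1})\|$ for all $(a_{ij})\in M_d(\A)$.

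For the inductive step, suppose $Y_n$ has been built so that $a\mapsto Y_n\phi(a)Y_n^{-1}$ is completely contractive. Consider the unital homomorphism $\rho_n:Y_n\B Y_n^{-1}\to\A$ determined by $\rho_n(Y_n\phi(a)Y_n^{-1})=a$; since $\rho_n=\phi^{-1}\circ\mathrm{Ad}_{Y_n^{-1}}$ it is completely bounded (using that $\phi$ is a completely bounded isomorphism and that conjugation by an invertible operator is completely bounded), and $Y_n\B Y_n^{-1}\subset B(\hil_2)$ is again a unital operator algebra. Applying Theorem \ref{t-paulsensim} to $\rho_n$ gives an invertible $X_n$ with $\|X_n\|^2=\|X_n^{-1}\|^2=\|\rho_n\|_{cb}=\|Y_n\phi(a)Y_n^{-1}\mapsto a\|_{cb}$ such that $Y_n\phi(a)Y_n^{-1}\mapsto X_na X_n^{-1}$ is completely contractive, hence $\|(Y_n\phi(a_{ij})Y_n^{-1})\|\geq\|(X_na_{ij}X_n^{-1})\|$. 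Now repeat the maneuver with the unital completely bounded homomorphism $\sigma_n:X_n\A X_n^{-1}\to\B$ given by $\sigma_n(X_na X_n^{-1})=\phi(a)$, i.e.\ $\sigma_n=\phi\circ\mathrm{Ad}_{X_n^{-1}}$: Theorem \ref{t-paulsensim} produces an invertible $Y_{n+1}$ with $\|Y_{n+1}\|^2=\|Y_{n+1}^{-1}\|^2=\|\sigma_n\|_{cb}=\|X_naX_n^{-1}\mapsto\phi(a)\|_{cb}$ such that $X_naX_n^{-1}\mapsto Y_{n+1}\phi(a)Y_{n+1}^{-1}$ is completely contractive, giving $\|(X_na_{ij}X_n^{-1})\|\geq\|(Y_{n+1}\phi(a_{ij})Y_{n+1}^{-1})\|$ and closing the induction. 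Concatenating these inequalities along $n$ produces the entire chain in the statement, and the two displayed norm identities are precisely the data recorded by Paulsen's theorem at the two stages of the construction.

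I do not expect a serious obstacle here: once the alternating iteration is correctly set up, the proof is bookkeeping. The only points requiring care are (i) checking at each stage that $Y_n\B Y_n^{-1}$ and $X_n\A X_n^{-1}$ are again unital operator algebras inside the appropriate $B(\hil_i)$ — immediate, since conjugation by an invertible operator is a completely bounded algebra isomorphism onto its range — and (ii) verifying that $\rho_n$ and $\sigma_n$ are well defined, unital, and completely bounded, which is exactly where the hypothesis that $\phi$ is a \emph{unital completely bounded isomorphism} is used. One should also note that the first inequality $\|(a_{ij})\|\geq\|(Y_n\phi(a_{ij})Y_n^{-1})\|$ then holds for every $n$, not merely $n=1$, by transitivity along the chain.
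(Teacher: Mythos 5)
Your proposal is correct and follows essentially the same route as the paper: alternating applications of Paulsen's similarity theorem to the maps $Y_n\phi(a)Y_n^{-1}\mapsto a$ and $X_n a X_n^{-1}\mapsto\phi(a)$, with the norm identities read off directly from Theorem \ref{t-paulsensim} and the chain of inequalities obtained by concatenation. The points you flag for care (well-definedness and complete boundedness of the intermediate homomorphisms) are handled implicitly in the paper and pose no obstacle.
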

\begin{proof}
By Theorem \ref{t-paulsensim}, there exists an invertible operator $Y_1$ such that the map 
$$
\theta:\A\to Y_1 \B Y_1^{-1}
$$
$$
a\mapsto Y_1\phi(a)Y_1^{-1}
$$
is completely contractive and such that 
$$
\|Y_1\|^2=\|Y_1^{-1}\|^2=\|\phi\|_{cb}.
$$
We see that $\theta$ itself is a unital completely bounded isomorphism. We now apply Theorem \ref{t-paulsensim} to $\theta^{-1}$ to obtain an invertible operator $X_1$ such that the map 
$$
Y_1 \phi(a) Y_1^{-1}\mapsto X_1 a X_1^{-1}
$$
is completely contractive, with
$$
\|X_1\|^2=\|X_1^{-1}\|^2=\|Y_1\phi(a)Y_1^{-1}\mapsto a\|_{cb}.
$$
Repeating this procedure inductively, we obtain the operators $X_n$ and $Y_n$ for every $n\in \N$. Indeed, assume that the operators  $Y_{n-1}$ and $X_{n-1}$ have been constructed. A further application of Theorem \ref{t-paulsensim} to the unital completely bounded homomorphism given by
$$
X_{n-1}aX^{-1}_{n-1}\mapsto \phi(a)
$$ 
yields an invertible operator $Y_n$ such that the map 
$$
X_{n-1} a X_{n-1}^{-1}\mapsto Y_n \phi(a) Y_n^{-1}
$$
is completely contractive with
$$
\|Y_n\|^2=\|Y_n^{-1}\|^2=\|X_{n-1} a X_{n-1}^{-1}\mapsto \phi(a) \|_{cb}.
$$
Likewise, we find an invertible operator $X_n$ such that the map
$$
Y_{n} \phi(a) Y_{n}^{-1}\mapsto X_n a X_n^{-1}
$$
is completely contractive with
$$
\|X_n\|^2=\|X_n^{-1}\|^2=\|Y_{n} \phi(a) Y_{n}^{-1}\mapsto a \|_{cb}.
$$
Finally, notice that 
\begin{align*}
\|(a_{ij})\|& \geq \|(Y_1 \phi(a_{ij}) Y^{-1}_1)\|\geq \|(X_1 a_{ij} X_1^{-1})\|\\
&\geq \|(Y_2 \phi(a_{ij}) Y^{-1}_2)\|\geq \|(X_2 a_{ij} X_2^{-1})\|\\
&\geq \ldots \\
&\geq\|(Y_n \phi(a_{ij}) Y^{-1}_n)\|\geq \|(X_{n}a_{ij} X^{-1}_n)\|
\end{align*}
for every $(a_{ij})\in M_d(\A)$ and every $d,n\in \N$.
\end{proof}

We now arrive at our most general result. We actually obtain more than what was announced in the introduction.

Given $a\in \A$ we denote by $r(a)$ its spectral radius (that is, the radius of the smallest closed disk containing the spectrum of $a$). Recall than an element is said to be quasi-nilpotent if its spectrum consists only of the point $0$.

\begin{theorem}\label{t-semisimple}
Let $\A\subset B(\hil_1)$ and $\B\subset B(\hil_2)$ be unital operator algebras. Let $\phi:\A\to \B$ be a unital completely bounded isomorphism. Then, for any  $\e>0$ and any finite set $\A_0\subset \A$, there exist two invertible operators $X\in B(\hil_1)$ and $Y\in B(\hil_2)$ 
such that the map
$$
XaX^{-1}\mapsto Y\phi(a)Y^{-1}
$$
is a complete contraction and such that
$$
\|X a X^{-1}\|\leq (1+\e)\left(1+\e/\rho(\e) \right)\|Y \phi(a) Y^{-1}\|
$$
for every $a\in \A_0$, where $\rho(\e)$ is a positive constant depending only on $\e$.

Moreover, if the subset $\A_0$ contains no non-trivial quasi-nilpotent element, then we have the sharper inequality 
$$
\|X a X^{-1}\|\leq \left(1+\e/\rho \right)\|Y \phi(a) Y^{-1}\|
$$
for every $a\in \A_0$, where 
$$
\rho=\inf_{a\in \A_0}r(a).
$$
\end{theorem}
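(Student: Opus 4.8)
The plan is to extract everything from Lemma~\ref{l-xnyn}. Write $\alpha_n(a)=\|X_na X_n^{-1}\|$ and $\beta_n(a)=\|Y_n\phi(a)Y_n^{-1}\|$. The lemma gives the interlaced chain
$$
\|a\|\ \geq\ \beta_1(a)\ \geq\ \alpha_1(a)\ \geq\ \beta_2(a)\ \geq\ \alpha_2(a)\ \geq\ \cdots,
$$
so for each fixed $a$ both sequences decrease to a common limit $L(a)$. Two facts will be used throughout: first, since the spectral radius is a similarity invariant, $\alpha_n(a)\geq r(a)$ for all $n$, whence $L(a)\geq r(a)$; second, since $\phi$ is an algebra isomorphism one has $\sigma(\phi(a))=\sigma(a)$, so $r(\phi(a))=r(a)$ and likewise $\beta_n(a)\geq r(a)$. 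Given $\eta>0$ and the finite set $\A_0$, for each $a\in\A_0$ choose $n_a$ with $\alpha_{n_a}(a)\leq L(a)+\eta$ and set $N=\max_{a\in\A_0}n_a$; then $\alpha_N(a)\leq\alpha_{n_a}(a)\leq L(a)+\eta\leq\beta_{N+1}(a)+\eta$ for every $a\in\A_0$. We take $X=X_N$ and $Y=Y_{N+1}$: by Lemma~\ref{l-xnyn} the map $XaX^{-1}\mapsto Y\phi(a)Y^{-1}$ is then automatically a complete contraction, and the only remaining task is to turn the additive control by $\eta$ into multiplicative control.

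For the sharper statement, assume $\A_0$ has no non-trivial quasi-nilpotent, so $\rho=\inf_{a\in\A_0}r(a)>0$, and run the above with $\eta=\e$. For each $a\in\A_0$ we have $\beta_{N+1}(a)\geq r(a)\geq\rho$, hence $\e\leq(\e/\rho)\beta_{N+1}(a)$, and therefore
$$
\|XaX^{-1}\|=\alpha_N(a)\ \leq\ \beta_{N+1}(a)+\e\ \leq\ \bigl(1+\e/\rho\bigr)\beta_{N+1}(a)\ =\ \bigl(1+\e/\rho\bigr)\|Y\phi(a)Y^{-1}\|.
$$

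For the general statement I would reduce to the case just treated by perturbing $\A_0$ by scalars. Fix $\rho(\e)>0$, to be pinned down from the arithmetic. For each $a\in\A_0$ pick $\lambda_a\in\C$ with $|\lambda_a|=\rho(\e)$ pointing toward a point of $\sigma(a)$ of maximal modulus (any $\lambda_a$ of that modulus if $a$ is quasi-nilpotent), so that $r(a+\lambda_a 1)\geq\rho(\e)$. Apply the sharper statement to $\A_0'=\{a+\lambda_a 1:a\in\A_0\}$, taking the working index $N$ large enough to serve $\A_0\cup\A_0'$ simultaneously; this produces invertible $X,Y$ with $XbX^{-1}\mapsto Y\phi(b)Y^{-1}$ a complete contraction, $\|X(a+\lambda_a 1)X^{-1}\|\leq(1+\e/\rho(\e))\|Y\phi(a+\lambda_a 1)Y^{-1}\|$ for all $a\in\A_0$, and, from the additive estimate applied to $\A_0$ itself, $\|XaX^{-1}\|\leq\|Y\phi(a)Y^{-1}\|+\e$ for all $a\in\A_0$. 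Passing back from $a+\lambda_a 1$ to $a$ on each side by $\|XaX^{-1}\|\leq\|X(a+\lambda_a 1)X^{-1}\|+\rho(\e)$ and $\|Y\phi(a+\lambda_a 1)Y^{-1}\|\leq\|Y\phi(a)Y^{-1}\|+\rho(\e)$ is what manufactures the extra factor $1+\e$ once the additive errors are absorbed; keeping both inequalities lets one split into the cases $\|Y\phi(a)Y^{-1}\|\geq\rho(\e)$, where the free additive bound suffices, and $\|Y\phi(a)Y^{-1}\|<\rho(\e)$, where one must invoke the perturbed multiplicative bound.

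I expect the genuine obstacle to be this last case when $a$ is quasi-nilpotent and $\|Y\phi(a)Y^{-1}\|$ is very small: there $r(a+\lambda_a 1)=|\lambda_a|$ \emph{exactly}, so the crude lower bound $\|Y\phi(a)Y^{-1}\|\geq r(\phi(a+\lambda_a 1))-|\lambda_a|$ degenerates to $0$ and the additive-to-multiplicative conversion cannot be carried out naively. Overcoming this — and this is precisely where the constant $\rho(\e)$ is forced upon us — should require choosing the shift sizes in careful balance with the conjugated norms (after the harmless normalization $\|a\|=1$ on $\A_0$, the desired inequality being homogeneous in $a$), so that the perturbed elements retain spectral radius comparable to their norm while the cost of the perturbation is only an $\e$-fraction of $\|Y\phi(a)Y^{-1}\|$. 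I would isolate this balancing as a separate lemma; it is the technical heart of the theorem.
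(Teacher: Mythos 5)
Your argument for the second (``sharper'') statement is correct and is essentially the paper's route in streamlined form: you use Lemma \ref{l-xnyn} to get the interlaced decreasing chain, pick $N$ so that $\alpha_N(a)\leq \beta_{N+1}(a)+\e$ on the finite set, and absorb the additive error using $\beta_{N+1}(a)\geq r(\phi(a))=r(a)\geq\rho$. (The identity $r(\phi(a))=r(a)$ does hold: apply the spectral radius formula to $\|\phi(a^n)\|\leq\|\phi\|\,\|a^n\|$ and to $\phi^{-1}$.) For that half you even avoid the paper's equicontinuity argument, since pointwise monotone convergence on a finite set suffices.

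The general statement, however, is left with a genuine gap, and it is exactly the one you flag yourself: after perturbing to $a+\lambda_a 1$ you pass back with the \emph{additive} estimates $\|XaX^{-1}\|\leq\|X(a+\lambda_a 1)X^{-1}\|+\rho(\e)$ and $\|Y\phi(a+\lambda_a 1)Y^{-1}\|\leq\|Y\phi(a)Y^{-1}\|+\rho(\e)$, and when $\|Y\phi(a)Y^{-1}\|<\rho(\e)$ these cannot be converted into the multiplicative bound $(1+\e)(1+\e/\rho(\e))\|Y\phi(a)Y^{-1}\|$ (the left side stays above the fixed constant $2\rho(\e)$ while the right side can be arbitrarily small). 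Deferring this to an unproven ``balancing lemma'' leaves the theorem unproved. There is also a quantifier circularity in your reduction: you choose $N$ to serve $\A_0\cup\A_0'$, but $\A_0'$ depends on $\rho(\e)$, which your own arithmetic forces to depend on data at level $N$. The paper resolves both issues by reversing the order of choices. First, for each $a\in\A_0$ the functions $f_n(\lambda)=\|X_n(a+\lambda I)X_n^{-1}\|$ are $1$-Lipschitz in $\lambda$ and decrease pointwise in $n$, so by Dini's theorem they converge uniformly on $\ol{\D}$; this lets one fix $N$ with $|f_N(\lambda)-f_{N+1}(\lambda)|<\e$ for \emph{all} $\lambda\in\ol{\D}$ and all $a\in\A_0$, before any shift is chosen. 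Second, with $X_N$ and $Y_{N+1}$ now fixed and $0\notin\A_0$, the quantities $\min_{a\in\A_0}\|X_Na X_N^{-1}\|$ and $\min_{a\in\A_0}\|Y_{N+1}\phi(a)Y_{N+1}^{-1}\|$ are fixed positive numbers, so one may choose $\delta$ small enough that the \emph{multiplicative} perturbation factor
$$
\sigma=\left(\sup_{a\in \A_0}\frac{\|Y_{N+1} (\phi(a)+\delta \zeta_a I)Y_{N+1}^{-1}\|}{\|Y_{N+1} \phi(a) Y_{N+1}^{-1}\|}\right)\left(\inf_{a\in \A_0}\frac{\|X_N (a+\delta \zeta_a I)X_N^{-1}\|}{\|X_N a X_N^{-1}\|} \right)^{-1}
$$
satisfies $\sigma<1+\e$; the sharper estimate applied at the shifted elements then gives $\|X_NaX_N^{-1}\|\leq\sigma(1+\e/\rho)\|Y_{N+1}\phi(a)Y_{N+1}^{-1}\|$ with $\rho=\inf_{a\in\A_0}r(a+\delta\zeta_aI)\geq\delta>0$. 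Replacing your additive passing-back by these ratio bounds at the fixed index $N$ is precisely the missing lemma, and the uniform-in-$\lambda$ choice of $N$ is what makes it legitimate to pick $\delta$ last.
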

\begin{proof}
No generality is lost if we assume that $\A_0$ does not contain the zero element. Let $\{X_n\}_n, \{Y_n\}_n$ be the sequence of invertible operators obtained from Lemma \ref{l-xnyn}, so that for each $n\in \N$ we find
$$
\|(a_{ij})\|
\geq \|(Y_n \phi(a_{ij}) Y_n^{-1})\|\geq \|(X_n a_{ij} X_n^{-1})\|\geq \|(Y_{n+1} \phi(a_{ij}) Y_{n+1}^{-1})\|
$$
for every $(a_{ij})\in M_d(\A)$ and every $d\in \N$.  

Fix for a moment an element $a\in \A$. For each $n\in \N$ and $\lambda\in \C$, put 
$$
f_n(\lambda)=\|X_n (a+\lambda I)X_n^{-1}\|.
$$
Since the sequence of non-negative numbers $\{f_n(\lambda)\}_n$ is decreasing, it has a limit which we call $f(\lambda)$. For $\lambda_1,\lambda_2\in \C$, we have
\begin{align*}
f_n(\lambda_1+\lambda_2)&=\|X_n (a+\lambda_1 I)X_n^{-1}+\lambda_2\|\
\end{align*}
whence
$$
|f_n(\lambda_1+\lambda_2)-f_n(\lambda_1)|\leq |\lambda_2|.
$$
Hence, the same is true for the function $f$ and therefore it is continuous. Since the sequence of continuous functions $\{f_n\}_n$ is pointwise decreasing, we conclude that it converges uniformly to $f$ on the closed unit disc $\ol{\D}$.

Fix now $\e>0$. Applying the discussion of the previous paragraph to the elements of the finite subset $\A_0$, we see that there exists a natural number $N$ (depending only on $\e$) large enough so that
$$
\left|\|X_n (a+\lambda I) X_n^{-1}\|-\|X_m (a+\lambda I)X_m ^{-1}\| \right|< \e
$$
for every $a\in \A_0, \lambda\in \ol{\D}$ and $n,m\geq N$. 
Notice also that for every $a\in \A$, we can find a complex number $\zeta_a$ of absolute value $1$ such that
$$
r(a+\delta \zeta_a I)\geq \delta
$$
for every $\delta>0$.
Choose now $0<\delta<1$ small enough (depending only on $\e$) so that $\sigma<1+\e$, where 
$$
\sigma=\left(\sup_{a\in \A_0}\frac{\|Y_{N+1} (\phi(a)+\delta \zeta_a I)Y_{N+1}^{-1}\|}{\|Y_{N+1} \phi(a) Y_{N+1}^{-1}\|}\right)\left(\inf_{a\in \A_0}\frac{\|X_N (a+\delta \zeta_a I)X_N^{-1}\|}{\|X_N a X_N^{-1}\|} \right)^{-1}.
$$
Put
$$
\rho=\rho(\e)=\inf_{a\in \A_0}r(a+\delta \zeta_a I).
$$
We then have
$$
\|X_n (a+\delta \zeta_a I) X_n^{-1}\|\geq \rho
$$
for every $n\in \N$ and $a\in \A_0$.
These observations imply that
\begin{align*}
\frac{\|X_{N}(a+\delta \zeta_a I)X^{-1}_{N}\|}{\|X_{N+1}(a+\delta \zeta_a I)X_{N+1}^{-1}\|}&\leq1+ \frac{\e}{\|X_{N+1}(a+\delta \zeta_a I)X_{N+1}^{-1}\|}\\
&\leq 1+ \e/\rho
\end{align*}
for every $a\in \A_0$.
Therefore,
\begin{align*}
\|X_{N}(a+\delta \zeta_a I)X_N^{-1}\|&\leq \left(1+\e/\rho \right)\|X_{N+1}(a+\delta \zeta_a I)X_{N+1}^{-1}\|\\
&\leq \left(1+\e/\rho\right)\|Y_{N+1}(\phi(a)+\delta \zeta_a I)Y_{N+1}^{-1}\|
\end{align*}
whence
\begin{align*}
\|X_{N}a X_N^{-1}\|
&\leq \sigma\left(1+\e/\rho \right)\|Y_{N+1}\phi(a) Y^{-1}_{N+1}\|
\end{align*}
for every $a\in \A_0$.
We conclude that
\begin{align*}
\|X_{N}a X_N^{-1}\|
&\leq (1+\e)\left(1+\e/\rho \right)\|Y_{N+1}\phi(a) Y^{-1}_{N+1}\|
\end{align*}
for every $a\in \A_0$. Choosing $X=X_N$ and $Y=Y_{N+1}$ establishes the first statement. 

The second statement follows from the same idea, since under the assumption
$$
\inf_{a\in \A_0}r(a)>0
$$
the above proof works with $\delta=0$ (in which case $\sigma=1$).
\end{proof}

%
%

We wish to emphasize here that the second statement of the previous theorem says that
$$
XaX^{-1}\mapsto Y\phi(a)Y^{-1}
$$
is a complete contraction which is ``almost'' isometric on $\A_0$, as announced in the introduction. 
If $\A$ and $\B$ are commutative (not necessarily semi-simple) closed unital operator algebras, then one has an analogous result for the quotients of these algebras by their Jacobson radicals and for \emph{any} finite subset $\A_0$. This adaptation is straightforward and the details are left to the reader.  


We close this section with two results that apply to algebras $\A$ that are completely boundedly isomorphic to either a $C^*$-algebra or a uniform algebra (that is, a closed unital subalgebra of a commutative $C^*$-algebra). In those two cases we obtain a stronger conclusion than that of Theorem \ref{t-semisimple}. First, we record the following well-known facts.

\begin{lemma}\label{l-unif}
Let $\A\subset B(\hil)$ be a unital operator algebra and $\Fil$ be a uniform algebra.
\begin{enumerate}
\item[\rm{(i)}]  Every unital bounded homomorphism  $\phi:\A\to \Fil$ is completely contractive.

\item[\rm{(ii)}] Let $\theta:\Fil \to \A$ be a unital completely bounded isomorphism. Then, there exists an invertible operator $Y\in B(\hil)$ such that the map
$$
f\mapsto Y\theta(f)Y^{-1}
$$ 
is completely isometric.
\end{enumerate} 
\end{lemma}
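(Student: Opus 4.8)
The plan is to derive both statements from Paulsen's similarity theorem (Theorem~\ref{t-paulsensim}) combined with one standard fact: a character on a unital operator algebra is automatically completely contractive. For part~(i), I would first realize $\Fil$ as a closed unital subalgebra of $C(X)$ for some compact Hausdorff space $X$ and use the isometric identification $M_n(C(X))\cong C(X,M_n)$, under which the norm of $(g_{ij})$ equals $\sup_{x\in X}\|(g_{ij}(x))\|_{M_n}$. Given a unital bounded homomorphism $\phi:\A\to\Fil$ and a point $x\in X$, the evaluation $\phi_x:\A\to\C$, $a\mapsto\phi(a)(x)$, is a unital homomorphism, hence a character on $\A$; consequently $\phi_x(a)\in\sigma(a)$ for every $a$, so $\|\phi_x\|\le 1$. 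Since a bounded linear functional on an operator space has completely bounded norm equal to its norm, $\phi_x$ is completely contractive, and therefore for all $(a_{ij})\in M_n(\A)$,
$$
\|(\phi(a_{ij}))\|_{M_n(\Fil)}=\sup_{x\in X}\|(\phi_x(a_{ij}))\|_{M_n}\le\|(a_{ij})\|_{M_n(\A)}.
$$
Thus $\phi$ is completely contractive, which proves~(i).

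For part~(ii), I would apply Theorem~\ref{t-paulsensim} to the unital completely bounded homomorphism $\theta:\Fil\to\A\subset B(\hil)$ to obtain an invertible operator $Y\in B(\hil)$ such that the unital homomorphism $\psi:\Fil\to B(\hil)$ given by $\psi(f)=Y\theta(f)Y^{-1}$ is completely contractive. Since $\theta$ is a bijection, $\psi$ is injective with range $\psi(\Fil)=Y\A Y^{-1}$, which is a unital operator algebra in $B(\hil)$; hence $\psi^{-1}:\psi(\Fil)\to\Fil$ is a unital bounded homomorphism from an operator algebra into a uniform algebra, so part~(i) applies to it. Therefore $\psi^{-1}$ is completely contractive, and for every $(f_{ij})\in M_n(\Fil)$ we get
$$
\|(f_{ij})\|_{M_n(\Fil)}\le\|(\psi(f_{ij}))\|_{M_n(B(\hil))}\le\|(f_{ij})\|_{M_n(\Fil)},
$$
forcing equality throughout. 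Hence $f\mapsto Y\theta(f)Y^{-1}$ is completely isometric, which is~(ii).

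The only ingredient that is not pure bookkeeping is the assertion used in part~(i) that a norm-one functional on an operator space (equivalently here, a character on a unital operator algebra) is completely contractive; this is classical and can be quoted from Paulsen's monograph, so I do not anticipate a genuine obstacle. The one point worth keeping in mind is that after conjugating $\theta$ by $Y$ one must observe that the \emph{inverse} map again takes values in the uniform algebra $\Fil$, which is precisely what makes part~(i) applicable and closes the argument.
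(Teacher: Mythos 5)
Your proof is correct, and part (ii) is argued exactly as in the paper: apply Theorem~\ref{t-paulsensim} to get $Y$ with $f\mapsto Y\theta(f)Y^{-1}$ completely contractive, then apply part (i) to the inverse map from the operator algebra $Y\A Y^{-1}$ back into $\Fil$. The only divergence is in part (i). The paper first proves scalar contractivity via the spectral radius computation $\|\phi(a)\|=r(\phi(a))\le r(a)\le\|a\|$ (using that norm equals spectral radius in a uniform algebra) and then quotes Loebl's theorem that a contractive map into a commutative $C^*$-algebra is automatically completely contractive. You instead compose $\phi$ with point evaluations on $X$, observe that each $\phi_x$ is a character on $\A$ (hence contractive, by the same spectral fact $\phi_x(a)\in\sigma(a)$), and invoke the automatic equality of norm and cb-norm for scalar functionals, together with $M_n(C(X))\cong C(X,M_n)$. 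This is in effect an inline proof of the special case of Loebl's result that is needed, so your argument is slightly more self-contained at the cost of carrying the identification of matrix norms over $C(X)$; both routes rest on the same two pillars (spectral radius dominates characters/homomorphisms, and commutativity of the target forces complete contractivity). No gaps.
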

\begin{proof}
The proof of (i) boils down to the fact that the norm of any element $f\in \Fil$ coincides with its spectral radius $r(f)$. Taking $n$-th roots from both sides of the inequality
$$
\|(\phi(a))^n\|=\|\phi(a^n)\|\leq \|\phi\| \|a^n\|.
$$
and letting $n$ tend to infinity yields
$$
\|\phi(a)\|=r(\phi(a))\leq r(a)\leq \|a\|
$$
for every $a\in \A$. Note that the first inequality in the line above holds because $\phi$ is a unital homomorphism. We conclude that $\phi$ is contractive. Now, $\Fil$ is a uniform algebra so $\phi$ takes values in a commutative $C^*$-algebra, and thus it  must be completely contractive (see \cite{loebl1975}), which shows (i).

For (ii), apply Theorem \ref{t-paulsensim} to obtain an invertible operator $Y$ such that the map
$$
f\mapsto Y\theta(f)Y^{-1}
$$ 
is  completely contractive.  Its inverse is also completely contractive by (i), and hence (ii) follows.
\end{proof}

In particular, we see by (i) above that if $\A$ and $\B$ are both completely isometrically isomorphic to uniform algebras, then any unital completely bounded isomorphism between them is necessarily completely isometric. Note that no conjugation by invertible elements is needed here. We now tackle the situation where $\A$ and $\B$ are merely completely boundedly isomorphic to a uniform algebra.

\begin{theorem}\label{t-unif}
Let $\A\subset B(\hil_1)$ and $\B\subset B(\hil_2)$ be unital operator algebras. Assume that there exists a unital completely bounded isomorphism $\theta:\Fil\to \A$ where $\Fil$ is a uniform algebra. Let $\phi:\A\to \B$ be a unital completely bounded isomorphism. Then, there exists two invertible operators $X\in B(\hil_1)$ and $Y\in B(\hil_2)$ with the property that the map
$$
XaX^{-1}\mapsto Y\phi(a)Y^{-1}
$$
is completely isometric.
\end{theorem}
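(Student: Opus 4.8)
The plan is to invoke Lemma \ref{l-unif}(ii) twice and then compose the two complete isometries it produces. First, since $\theta:\Fil\to\A$ is a unital completely bounded isomorphism and $\A\subset B(\hil_1)$, Lemma \ref{l-unif}(ii) yields an invertible operator $X\in B(\hil_1)$ for which the map
$$
\Theta:\Fil\to X\A X^{-1},\qquad f\mapsto X\theta(f)X^{-1}
$$
is completely isometric. In particular, $X\A X^{-1}$ is completely isometrically isomorphic to the uniform algebra $\Fil$.

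Next, I would observe that $\phi\circ\theta:\Fil\to\B$ is again a unital completely bounded isomorphism, being a composition of two such maps. Applying Lemma \ref{l-unif}(ii) once more, this time to $\phi\circ\theta$ with $\B\subset B(\hil_2)$, produces an invertible operator $Y\in B(\hil_2)$ such that
$$
\Psi:\Fil\to Y\B Y^{-1},\qquad f\mapsto Y\phi(\theta(f))Y^{-1}
$$
is completely isometric.

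Finally, consider the composition $\Psi\circ\Theta^{-1}:X\A X^{-1}\to Y\B Y^{-1}$. Because $\theta$ is surjective, every element of $\A$ has the form $\theta(f)$ for some $f\in\Fil$, so this composite map sends $XaX^{-1}$ to $Y\phi(a)Y^{-1}$; that is, it is precisely the map whose complete isometry we must establish. Being a composition of two completely isometric isomorphisms, it is itself completely isometric, which finishes the argument.

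I do not expect a genuine obstacle here: the only points requiring care are bookkeeping ones, namely that the composition of two unital completely bounded isomorphisms is again one (so that Lemma \ref{l-unif}(ii) may legitimately be applied to $\phi\circ\theta$), and that the composite $\Psi\circ\Theta^{-1}$ is indeed the announced map $XaX^{-1}\mapsto Y\phi(a)Y^{-1}$ rather than something depending on the particular presentation of $\A$ as a conjugate of $\Fil$. The substantive content, namely that conjugating any completely bounded isomorphism onto a uniform algebra can be made completely isometric because unital bounded homomorphisms into uniform algebras are automatically completely contractive, is already furnished by Lemma \ref{l-unif}.
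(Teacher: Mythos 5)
Your argument is correct and is essentially identical to the paper's own proof: apply Lemma \ref{l-unif}(ii) to $\theta$ and to $\phi\circ\theta$, then compose the resulting complete isometries to obtain the map $XaX^{-1}\mapsto Y\phi(a)Y^{-1}$. The bookkeeping points you flag (that $\phi\circ\theta$ is again a unital completely bounded isomorphism, and that the composite is the announced map) are exactly the ones the paper handles implicitly, so there is no gap.
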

\begin{proof}
By applying Lemma \ref{l-unif} (ii) to the maps $\theta$ and $\phi\circ \theta$, we find two invertible operators $X\in B(\hil_1)$ and $Y\in B(\hil_2)$ with the property that the maps
$$
f\mapsto Y\phi\circ \theta(f)Y^{-1}
$$
and
$$
f\mapsto X\theta(f)X^{-1}
$$
defined on $\Fil$ are completely isometric, which implies that the map
$$
XaX^{-1}\mapsto Y\phi(a)Y^{-1}
$$
is completely isometric as well.
\end{proof}

Obviously, this result applies to all algebras which are similar to uniform algebras, that is $\A=R\Fil R^{-1}$ for some invertible operator $R$.

We now turn to the case of $C^*$-algebras. Once again, we start with some well-known facts.

\begin{lemma}\label{l-c*}
Let $\Cal$ be a unital $C^*$-algebra and $\A\subset B(\hil)$ be a unital operator algebra. Let $\phi:\Cal \to \A$ be a unital completely bounded isomorphism. Then, there exists an invertible operator $Y$ such that the map
$$
c\mapsto Y\phi(c)Y^{-1}
$$ 
is a $*$-isomorphism onto $Y\A Y^{-1}$.
\end{lemma}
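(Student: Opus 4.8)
The plan is to invoke Paulsen's similarity theorem to replace $\phi$ by a completely contractive homomorphism, and then to appeal to the well-known rigidity of contractive unital maps on $C^*$-algebras. First I would apply Theorem \ref{t-paulsensim} to the unital completely bounded homomorphism $\phi$ to produce an invertible operator $Y\in B(\hil)$ for which the map
$$
\psi:\Cal\to B(\hil),\qquad \psi(c)=Y\phi(c)Y^{-1}
$$
is completely contractive. Because conjugation by the invertible operator $Y$ is a unital algebra automorphism of $B(\hil)$, the map $\psi$ remains a unital homomorphism; it is moreover injective, and its range is $\psi(\Cal)=Y\phi(\Cal)Y^{-1}=Y\A Y^{-1}$ since $\phi$ maps $\Cal$ bijectively onto $\A$.

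Next I would use the classical fact that a unital contraction on a unital $C^*$-algebra is automatically positive, and hence $*$-preserving. A short self-contained justification runs as follows. Given a self-adjoint $c\in\Cal$ with $\|c\|\leq 1$ and a real number $t$, one has $\|\psi(c)+itI\|=\|\psi(c+itI)\|\leq\|c+itI\|\leq\sqrt{1+t^{2}}$; writing $\psi(c)=h+ik$ with $h=h^{*}$ and $k=k^{*}$ and using the elementary inequality $\|x+iy\|\geq\|y\|$, valid for self-adjoint $x,y$ in a $C^*$-algebra, one obtains $\|k+tI\|\leq\sqrt{1+t^{2}}$ for every real $t$, which forces $k=0$. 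Since $0\leq c\leq I$ then implies $\|I-\psi(c)\|=\|\psi(I-c)\|\leq 1$ with $\psi(c)$ self-adjoint, we conclude $\psi(c)\geq 0$. Splitting an arbitrary $c\in\Cal$ into its real and imaginary parts, this gives $\psi(c^{*})=\psi(c)^{*}$ for all $c\in\Cal$.

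Finally, a unital homomorphism between $C^*$-algebras that preserves adjoints is a $*$-homomorphism, so $\psi$ is a $*$-homomorphism; its image $\psi(\Cal)=Y\A Y^{-1}$ is therefore a $C^*$-subalgebra of $B(\hil)$, and since $\psi$ is a bijection onto $Y\A Y^{-1}$ it is a $*$-isomorphism onto it, which is exactly the assertion. I do not expect any genuine obstacle here: the substance is contained entirely in Theorem \ref{t-paulsensim} and in the standard fact that unital contractions on $C^*$-algebras are positive, and the only step that deserves a moment of care is precisely that last fact, namely the passage from contractivity of $\psi$ to its $*$-preservation, which is where one either cites the literature (e.g.\ Paulsen's monograph on completely bounded maps) or inserts the short argument sketched above.
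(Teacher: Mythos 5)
Your proposal is correct and follows exactly the paper's route: apply Theorem \ref{t-paulsensim} to obtain $Y$ making $c\mapsto Y\phi(c)Y^{-1}$ a unital completely contractive homomorphism, then invoke the standard fact that unital contractions on a $C^*$-algebra are self-adjoint to conclude the map is a $*$-isomorphism onto $Y\A Y^{-1}$. The only difference is that you supply a self-contained (and correct) proof of that last fact, which the paper simply cites as well known.
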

\begin{proof}
Theorem \ref{t-paulsensim} yields an invertible operator $Y$ such that the map
$$
c\mapsto Y\phi(c)Y^{-1}
$$ 
is completely contractive. Now, unital contractive linear maps of $C^*$-algebras are necessarily self-adjoint, so that this map is an injective $*$-homomor\-phism, and hence a $*$-isomorphism onto $Y\A Y^{-1}$.
\end{proof}
Note that the assumption on $\A$ in that lemma implies in particular that it must be closed in the norm topology of $B(\hil)$. Moreover, we see that the algebra $Y\A Y^{-1}$ is a $C^*$-algebra.

We now establish an analogue of Theorem \ref{t-unif}.

\begin{theorem}\label{t-c*}
Let $\A\subset B(\hil_1)$ and $\B\subset B(\hil_2)$ be unital operator algebras. Assume that there exists a unital completely bounded isomorphism $\theta:\Cal\to \A$ where $\Cal$ is a unital $C^*$-algebra. Let $\phi:\A\to \B$ be a unital completely bounded isomorphism. Then, there exists two invertible operators $X\in B(\hil_1)$ and $Y\in B(\hil_2)$ with the property that the map
$$
XaX^{-1}\mapsto Y\phi(a)Y^{-1}
$$
is completely isometric.
\end{theorem}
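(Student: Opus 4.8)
The plan is to mimic the proof of Theorem \ref{t-unif}, replacing the role of Lemma \ref{l-unif}(ii) by Lemma \ref{l-c*}. The key preliminary observation is that $\phi\circ\theta:\Cal\to\B$ is again a unital completely bounded isomorphism whose domain is a unital $C^*$-algebra, so Lemma \ref{l-c*} applies to it just as it applies to $\theta$.

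First I would apply Lemma \ref{l-c*} to the map $\theta:\Cal\to\A$ to obtain an invertible operator $X\in B(\hil_1)$ such that
$$
\alpha:\Cal\to X\A X^{-1},\qquad \alpha(c)=X\theta(c)X^{-1}
$$
is a $*$-isomorphism onto the $C^*$-algebra $X\A X^{-1}$ (that the image is indeed a $C^*$-algebra is part of Lemma \ref{l-c*} and the remark following it). Similarly, applying Lemma \ref{l-c*} to the unital completely bounded isomorphism $\phi\circ\theta:\Cal\to\B$ yields an invertible operator $Y\in B(\hil_2)$ such that
$$
\beta:\Cal\to Y\B Y^{-1},\qquad \beta(c)=Y\phi(\theta(c))Y^{-1}
$$
is a $*$-isomorphism onto the $C^*$-algebra $Y\B Y^{-1}$.

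Next I would invoke the standard fact that a $*$-isomorphism between $C^*$-algebras is automatically completely isometric. Hence both $\alpha$ and $\beta$ are complete isometries, and therefore so is $\beta\circ\alpha^{-1}:X\A X^{-1}\to Y\B Y^{-1}$. Finally I would identify this composition with the map in question: writing $a=\theta(c)$ for $c\in\Cal$, we have $\alpha^{-1}(XaX^{-1})=c$ and $\beta(c)=Y\phi(\theta(c))Y^{-1}=Y\phi(a)Y^{-1}$, so $\beta\circ\alpha^{-1}$ is precisely the map $XaX^{-1}\mapsto Y\phi(a)Y^{-1}$. This establishes the theorem.

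The essential content has been pushed into Lemma \ref{l-c*} (and, through it, into Paulsen's similarity theorem together with the self-adjointness of unital contractive maps of $C^*$-algebras), so I do not expect a genuine obstacle in the argument above; the one point deserving attention is simply to make sure that the similarities produced by Lemma \ref{l-c*} send $\A$ and $\B$ onto honest $C^*$-subalgebras of $B(\hil_1)$ and $B(\hil_2)$, which is exactly what that lemma guarantees. Everything else is routine bookkeeping with the compositions.
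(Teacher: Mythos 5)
Your proposal is correct and follows essentially the same route as the paper: apply Lemma \ref{l-c*} to $\theta$ and to $\phi\circ\theta$, note that the resulting $*$-isomorphisms are completely isometric, and compose. The only difference is that you spell out the bookkeeping of the composition $\beta\circ\alpha^{-1}$ explicitly, which the paper leaves implicit.
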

\begin{proof}
By applying Lemma \ref{l-c*} to the maps $\theta$ and $\phi\circ \theta$, we find two invertible operators $X\in B(\hil_1)$ and $Y\in B(\hil_2)$ with the property that the maps
$$
c\mapsto Y\phi\circ \theta(c)Y^{-1}
$$
and
$$
c\mapsto X\theta(c)X^{-1}
$$
defined on $\Cal$ are $*$-isomorphisms (and thus completely isometric), which in turn implies that the map
$$
XaX^{-1}\mapsto Y\phi(a)Y^{-1}
$$
is completely isometric as well.
\end{proof}

Once again, we mention that this theorem applies to any algebra which is similar to a unital $C^*$-algebra. This includes for instance commutative amenable unital operator algebras (as was recently shown in \cite{marcoux2013}).  It is unknown to us whether Theorem \ref{t-c*} holds for non-commutative amenable operator algebras. Note however that  the disc algebra is a uniform algebra which is not amenable (see \cite{pisier1997}), and thus the conclusion of Theorem \ref{t-c*} is not equivalent to amenability. 

The previous theorem holds in slightly greater generality. Indeed,  Theorem 2.6 of \cite{pitts2008} implies that the complete boundedness of the inverse is automatic once $\theta$ is assumed to be a bijective bounded homomorphism (we are grateful to David Pitts for bringing the results of \cite{pitts2008} to our attention). Furthermore, the condition of $\theta$ being completely bounded can be relaxed to mere boundedness if $\Cal$ satisfies the famous \emph{Kadison similarity conjecture} (see \cite{kadison1955}).  This long-standing conjecture is known to have an affirmative answer for irreducible or nuclear $C^*$-algebras (see \cite{christensen1981}) as well as for $C^*$-algebras that admit a cyclic vector (see \cite{haagerup1983}).

\section{A counter-example}

In this section we present an example that shows that Theorems \ref{t-unif} and \ref{t-c*} do not hold for general algebras. Indeed, we exhibit a unital completely bounded isomorphism $\Psi$ between finite dimensional commutative unital operator algebras for which it is impossible to find two invertible operators $X$ and $Y$ such that the map
$$
XaX^{-1}\mapsto Y\Psi(a)Y^{-1}
$$
is isometric, let alone completely isometric. 

We remark that these algebras almost satisfy the assumptions of Theorem \ref{t-unif}. In fact, under the extra assumption that they are semi-simple, their Gelfand transforms would yield the required map for Theorem \ref{t-unif} to apply. Therefore, our counter-example illustrates the importance of the absence of non-trivial quasi-nilpotent elements in that theorem.

 We will need the following two basic computational lemmas.

\begin{lemma}\label{l-rel}
Let $\alpha,\beta,\gamma,\alpha',\beta',\gamma',\delta'$ be complex numbers such that
\begin{equation}\label{e-norm2}
\left\|
\left(
\begin{array}{cc}
\alpha z_1 & \beta z_1+\gamma z_2 \\
0&0
\end{array}
\right)\right\|=
\left\|\left(
\begin{array}{cc}
\alpha'z_1& \beta'z_1+\gamma' z_2\\
0 &\delta' z_2
\end{array}
\right)\right\|
\end{equation}
for every $z_1,z_2\in \C$. 
Then, we have
\begin{equation}\label{e-gamma}
|\gamma|^2=|\gamma'|^2+|\delta'|^2
\end{equation}
and
\begin{equation}\label{e-alpha}
|\alpha|^2+|\beta|^2=|\alpha'|^2+|\beta'|^2.
\end{equation}
Moreover, we have
\begin{equation}\label{e-betagamma}
|\beta| |\gamma|\leq |\beta'| |\gamma'|.
\end{equation}
with equality only if $\alpha'\delta'=0$.
\end{lemma}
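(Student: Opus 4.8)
The plan is to compute both operator norms explicitly as functions of $(z_1, z_2)$ and then match coefficients. For a rank-one-type matrix of the shape appearing on the left, the norm squared equals the largest eigenvalue of the Gram matrix (the matrix times its adjoint, or its adjoint times itself). Concretely, for a matrix $M$ with a single nonzero row $(u, v)$, one has $\|M\|^2 = |u|^2 + |v|^2$; applying this to the left-hand side with $u = \alpha z_1$ and $v = \beta z_1 + \gamma z_2$ gives
$$
\left\|\begin{pmatrix}\alpha z_1 & \beta z_1 + \gamma z_2 \\ 0 & 0\end{pmatrix}\right\|^2 = |\alpha z_1|^2 + |\beta z_1 + \gamma z_2|^2.
$$
For the right-hand side, I would compute $N^* N$ where $N$ is the $2\times 2$ matrix, obtaining a positive $2\times 2$ matrix whose entries are Hermitian forms in $z_1, z_2$; its largest eigenvalue is $\tfrac12\big(\operatorname{tr} + \sqrt{\operatorname{tr}^2 - 4\det}\big)$. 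The trace of $N^*N$ is $|\alpha' z_1|^2 + |\beta' z_1 + \gamma' z_2|^2 + |\delta' z_2|^2$, and $\det(N^*N) = |\det N|^2 = |\alpha'|^2|\delta'|^2|z_1|^2|z_2|^2$.

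Now I would extract the three conclusions by specializing $z_1, z_2$ and by looking at growth rates. Setting $z_1 = 0$ kills the determinant term on the right and forces $|\gamma|^2 |z_2|^2 = (|\gamma'|^2 + |\delta'|^2)|z_2|^2$, giving \eqref{e-gamma}. Setting $z_2 = 0$ similarly yields $|\alpha|^2 + |\beta|^2 = |\alpha'|^2 + |\beta'|^2$ — except one must be slightly careful, since with $z_2 = 0$ the right-hand matrix $N$ has second row $(0,0)$ as well after restricting, so its norm squared is again just $|\alpha' z_1|^2 + |\beta' z_1|^2$; this gives \eqref{e-alpha}. For \eqref{e-betagamma}, the idea is to compare the full quadratic-in-$(z_1,z_2)$ behaviour: squaring \eqref{e-norm2} and using the eigenvalue formula, the left side is the exact Hermitian form $|\alpha|^2|z_1|^2 + |\beta z_1 + \gamma z_2|^2$, while the right side is $\lambda_{\max}$ of $N^*N$, which always satisfies $\lambda_{\max} \le \operatorname{tr}(N^*N)$. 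Thus
$$
|\alpha|^2|z_1|^2 + |\beta z_1 + \gamma z_2|^2 \le |\alpha'|^2|z_1|^2 + |\beta' z_1 + \gamma' z_2|^2 + |\delta'|^2|z_2|^2.
$$
Using \eqref{e-gamma} and \eqref{e-alpha} to cancel the pure $|z_1|^2$ and pure $|z_2|^2$ terms, the surviving inequality is on the cross terms: $2\operatorname{Re}(\beta \overline{\gamma}\, z_1 \overline{z_2}) \le 2\operatorname{Re}(\beta' \overline{\gamma'}\, z_1 \overline{z_2})$ for all $z_1, z_2$, which by optimizing the phase of $z_1 \overline{z_2}$ forces $|\beta||\gamma| \le |\beta'||\gamma'|$. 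Equality in this last step propagates back to equality $\lambda_{\max}(N^*N) = \operatorname{tr}(N^*N)$ for the relevant $(z_1,z_2)$, which happens exactly when the smaller eigenvalue vanishes, i.e. $\det(N^*N) = 0$, i.e. $\alpha'\delta' = 0$; that is the ``equality only if'' clause.

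The main obstacle I anticipate is the bookkeeping around \eqref{e-betagamma} and its equality case: one has to argue that the inequality $\lambda_{\max} \le \operatorname{tr}$ can only be tight when $\det = 0$, and that this tightness is actually forced by equality in the cross-term comparison (as opposed to being lost in the cancellations). This requires being careful that the cancellation of the $|z_1|^2$ and $|z_2|^2$ coefficients is exact — which is precisely what \eqref{e-gamma} and \eqref{e-alpha} guarantee — so that no slack is introduced before the cross-term step. Everything else is a routine $2\times 2$ norm computation, so I would state the norm formulas once, cleanly, and then let the three specializations ($z_1=0$, $z_2=0$, general $z_1,z_2$ with optimized phase) do the work.
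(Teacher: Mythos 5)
Your proposal is correct and follows essentially the same route as the paper: the explicit rank-one norm formula on the left, the eigenvalue formula and the bound $\lambda_{\max}(N^*N)\leq\operatorname{tr}(N^*N)$ on the right, the specializations $z_1=0$ and $z_2=0$ for \eqref{e-gamma} and \eqref{e-alpha}, a phase-optimized choice of unimodular $z_1,z_2$ for \eqref{e-betagamma}, and strictness of the trace bound when $\det N\neq 0$ for the equality clause. (Your cross-term formulation is in fact slightly sharper: since $2\operatorname{Re}(\beta\overline{\gamma}z_1\overline{z_2})\leq 2\operatorname{Re}(\beta'\overline{\gamma'}z_1\overline{z_2})$ holds for \emph{all} $z_1,z_2$, it already forces $\beta\overline{\gamma}=\beta'\overline{\gamma'}$, but this does not change the validity of the argument.)
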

\begin{proof}
Computing the square of both sides of equation (\ref{e-norm2}) gives
\begin{align*}
|&\alpha z_1|^2+|\beta z_1+\gamma z_2|^2\\
&=\frac{1}{2}\left( |\alpha' z_1|^2+|\beta' z_1+\gamma' z_2|^2+|\delta' z_2|^2\right) \numberthis \label{e-norm3} \\
&+\frac{1}{2}\sqrt{(|\alpha' z_1|^2+|\beta' z_1+\gamma' z_2|^2+|\delta' z_2|^2)^2-4|\alpha'z_1|^2 |\delta'z_2|^2 } 
\end{align*}
for every $z_1,z_2\in \C$. In particular, we note that
\begin{equation}\label{e-normineq}
|\alpha z_1|^2+|\beta z_1+\gamma z_2|^2\leq  |\alpha' z_1|^2+|\beta' z_1+\gamma' z_2|^2+|\delta' z_2|^2
\end{equation}
for every $z_1,z_2\in \C$.
If $z_1=0$ and $z_2=1$, equation (\ref{e-norm3}) implies that
$$
|\gamma|^2=|\gamma'|^2+|\delta'|^2
$$
while setting $z_1=1$ and $z_2=0$ yields
$$
|\alpha|^2+|\beta|^2=|\alpha'|^2+|\beta'|^2
$$
so that equations (\ref{e-gamma}) and (\ref{e-alpha}) are established.
Next, pick $z_1,z_2$ with absolute value one and with the property that 
$$
|\beta z_1+\gamma z_2|=|\beta|+|\gamma|.
$$
A consequence of (\ref{e-normineq}) is that
\begin{align*}
|\alpha|^2+(|\beta|+|\gamma|)^2&\leq  |\alpha' |^2+|\beta' z_1+\gamma' z_2|^2+|\delta' |^2\\
&\leq |\alpha'|^2+(|\beta'|+|\gamma'|)^2+|\delta'|^2\\
&=|\alpha'|^2+|\beta'|^2+2|\beta'| |\gamma'|+|\gamma'|^2+|\delta'|^2\\
&=|\alpha|^2+|\beta|^2+|\gamma|^2+2|\beta'| |\gamma'|
\end{align*}
where the last line follows from equations (\ref{e-gamma}) and (\ref{e-alpha}). We infer that
$$
|\beta| |\gamma|\leq |\beta'| |\gamma'|.
$$
The proof is now completed by noting that inequality (\ref{e-normineq}) is strict if $\alpha'\delta'z_1 z_2\neq 0$.
\end{proof}

\begin{lemma}\label{l-diag}
Let $\alpha,\beta,\gamma,\alpha',\beta',\gamma',\delta'$ be complex numbers with $\gamma'\neq 0$. Assume that
\begin{equation}\label{e-norm4}
\left\|
\left(
\begin{array}{cc}
\alpha z_1 & \beta z_1+\gamma z_2 \\
0&0
\end{array}
\right)\right\|=
\left\|\left(
\begin{array}{cc}
\alpha'z_1& \beta'z_1+\gamma' z_2\\
0 &\delta' z_2
\end{array}
\right)\right\|
\end{equation}
for every $z_1,z_2\in \C$. Then, $\alpha'\delta'=0$.
\end{lemma}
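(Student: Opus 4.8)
The plan is to differentiate the norm identity (\ref{e-norm4}) once in the variable $z_1$ at $z_1=0$. This will pin down the product $|\beta|\,|\gamma|$ in terms of $|\beta'|\,|\gamma'|$, and then Lemma \ref{l-rel} closes the argument, since its inequality (\ref{e-betagamma}) can be an equality only when $\alpha'\delta'=0$.

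First I would observe that $\gamma\neq 0$: otherwise (\ref{e-gamma}) would give $|\gamma'|^2+|\delta'|^2=0$, contradicting the hypothesis $\gamma'\neq 0$. Next, put $L=|\alpha z_1|^2+|\beta z_1+\gamma z_2|^2$, $T=|\alpha' z_1|^2+|\beta' z_1+\gamma' z_2|^2+|\delta' z_2|^2$ and $D=|\alpha' z_1|^2|\delta' z_2|^2$. The matrix on the left of (\ref{e-norm4}) has rank at most one, so its squared norm is exactly $L$, and therefore (\ref{e-norm3}) exhibits $L$ as a root of $x^2-Tx+D=0$ (it is the larger eigenvalue of the relevant $2\times 2$ matrix). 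Hence $L(T-L)=D$ as a (real-polynomial) identity valid for all $z_1,z_2\in\C$.

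Now I would fix a unimodular $\omega\in\C$, substitute $z_2=1$ and $z_1=t\omega$, and view $L,T,D$ as functions of $t\in\R$. At $t=0$ one has $L(0)=|\gamma|^2$, $T(0)=|\gamma'|^2+|\delta'|^2=|\gamma|^2$ by (\ref{e-gamma}), and $D(0)=D'(0)=0$. Differentiating $L(T-L)=D$ at $t=0$: the term $L'(0)(T(0)-L(0))$ vanishes because $T(0)=L(0)$, leaving $L(0)(T'(0)-L'(0))=0$; since $L(0)=|\gamma|^2>0$ this forces $T'(0)=L'(0)$. A direct computation gives $L'(0)=2\operatorname{Re}(\ol{\gamma}\beta\omega)$ and $T'(0)=2\operatorname{Re}(\ol{\gamma'}\beta'\omega)$, so $\operatorname{Re}(\ol{\gamma}\beta\omega)=\operatorname{Re}(\ol{\gamma'}\beta'\omega)$. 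Letting $\omega$ run over the unit circle then forces $\ol{\gamma}\beta=\ol{\gamma'}\beta'$, hence in particular $|\beta|\,|\gamma|=|\beta'|\,|\gamma'|$. Lemma \ref{l-rel} supplies the opposite inequality $|\beta|\,|\gamma|\le|\beta'|\,|\gamma'|$, with equality only if $\alpha'\delta'=0$; we conclude $\alpha'\delta'=0$.

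I do not expect a serious obstacle here, since the whole argument is essentially a single differentiation. The two spots deserving a word of care are the reduction to $\gamma\neq 0$ (needed to divide by $L(0)$) and the elementary fact that the value of $\operatorname{Re}(c\omega)$ over all unimodular $\omega$ determines the complex number $c$. One could also avoid calculus altogether: comparing the first Fourier harmonic in $\arg(z_1\ol{z_2})$ on the two sides of $L(T-L)=D$ restricted to $|z_1|=|z_2|=1$ — where that harmonic is a positive multiple of $(|\alpha|^2+|\beta|^2+|\gamma|^2)$ times $\operatorname{Re}(\ol{\gamma'}\beta' z_1\ol{z_2})-\operatorname{Re}(\ol{\gamma}\beta z_1\ol{z_2})$, while $D$ is constant — again yields $\ol{\gamma}\beta=\ol{\gamma'}\beta'$, and the conclusion follows as above.
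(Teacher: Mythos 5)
Your proof is correct, but it reaches the key identity $|\beta|\,|\gamma|=|\beta'|\,|\gamma'|$ by a genuinely different route than the paper. The paper substitutes the single point $z_1=1$, $z_2=-\beta'/\gamma'$ (chosen to annihilate the off-diagonal entry of the right-hand matrix), pushes the resulting instance of (\ref{e-normineq}) through (\ref{e-alpha}) and (\ref{e-betagamma}) to get $|\delta'|^2\ge|\gamma|^2-|\gamma'|^2$, and then (\ref{e-gamma}) forces equality in (\ref{e-betagamma}); this requires a (tacit) separate treatment of the case $\beta'=0$, where equality in (\ref{e-betagamma}) is automatic. You instead encode the full hypothesis as the polynomial identity $L(T-L)=D$ and extract the coefficient of $t$ along the ray $z_1=t\omega$, $z_2=1$; the computation checks out ($L(0)=T(0)=|\gamma|^2$ by (\ref{e-gamma}), $D$ vanishes to second order, and $L$, $T$, $D$ are genuine polynomials in $t$, so the differentiation is legitimate). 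Both arguments finish identically by invoking the equality clause of (\ref{e-betagamma}) from Lemma \ref{l-rel}. What your version buys: no case split on $\beta'$, and the stronger conclusion $\ol{\gamma}\beta=\ol{\gamma'}\beta'$ rather than just equality of the moduli of the products. What the paper's buys: it stays entirely within the inequality (\ref{e-normineq}) already isolated in Lemma \ref{l-rel} and needs no calculus. The two points you flag for care --- that $\gamma\neq 0$ follows from (\ref{e-gamma}) and $\gamma'\neq 0$, and that knowing $\operatorname{Re}(c\omega)$ for all unimodular $\omega$ determines $c$ --- are both handled correctly.
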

\begin{proof}
In light of Lemma \ref{l-rel}, it suffices to show that $|\beta||\gamma|=|\beta'||\gamma'|$ in case where $\beta'\neq 0$.
Since $\gamma'\neq 0$ we can choose $z_1=1$ and $z_2=-\beta'/\gamma'$ and from (\ref{e-normineq}) we conclude that
\begin{align*}
|\alpha'|^2+\frac{|\delta'|^2|\beta'|^2}{|\gamma'|^2}\geq |\alpha |^2+\left|\beta-\frac{\gamma \beta'}{\gamma'}\right|^2.
\end{align*}
Therefore,
\begin{align*}
|\alpha'|^2+\frac{|\delta'|^2|\beta'|^2}{|\gamma'|^2}&\geq |\alpha |^2+|\beta|^2+\frac{|\gamma|^2 |\beta'|^2}{|\gamma'|^2}-2\frac{|\beta| |\gamma| |\beta'|}{|\gamma'|}
\end{align*}
which, after an application of equation (\ref{e-alpha}) and some easy simplifications, implies that
\begin{align*}
|\delta'|^2|\beta'|^2&\geq |\beta'|^2|\gamma'|^2+|\gamma|^2 |\beta'|^2-2|\beta| |\gamma| |\beta'| |\gamma'|.
\end{align*}
Since we assume $\beta'\neq 0$, this reduces to
$$
|\delta'|^2\geq |\gamma'|^2+|\gamma|^2 -2\frac{|\beta| |\gamma|  |\gamma'|}{|\beta'|}.
$$
In view of inequality (\ref{e-betagamma}), we find
$$
|\delta'|^2\geq |\gamma|^2 -|\gamma'|^2.
$$
By equation (\ref{e-gamma}), we see that equality must hold in (\ref{e-betagamma}) and the proof is complete.
\end{proof}

We are now ready to present our counter-example, the idea behind which is rather simple. Indeed, it exploits the fact that completely bounded linear isomorphisms of operator spaces can be much wilder than their multiplicative counterparts, by embedding an operator space as a corner of an operator algebra. We are indebted to Ken Davidson for suggesting this approach.

More precisely, consider the operator space $\mathscr{D}\subset M_2(\C)$ consisting of elements of the form
$$
\left(
\begin{array}{cc}
z_1 & 0 \\
0 & z_2
\end{array}
\right)
$$
along with the operator space $\mathscr{R}\subset M_2(\C)$ consisting of elements of the form
$$
\left(
\begin{array}{cc}
z_1 & z_2 \\
0 & 0
\end{array}
\right).
$$
Define now $\A_{\Ds}$ as the set of elements of the form
$$
\left(
\begin{array}{cc}
\lambda I_{\C^2} & s\\
0 & \lambda I_{\C^2}
\end{array}
\right)
$$
where $\lambda\in \C$ and $s\in \Ds$, and define $\A_{\Rs}$ analogously using elements of $\Rs$. Then, $\A_{\Rs}$ and $\A_{\Ds}$ are commutative unital subalgebras of $M_4(\C)$ (note that these algebras contain many non-trivial quasi-nilpotent elements). The map $\psi:\Rs\to \Ds$ defined as
$$
\psi\left(
\begin{array}{cc}
z_1 & z_2 \\
0 & 0
\end{array}
\right)=\left(
\begin{array}{cc}
z_1 & 0 \\
0 & z_2
\end{array}
\right)
$$
is easily seen to be a completely bounded linear isomorphism with completely bounded inverse, and therefore the induced map $\Psi:\A_{\Rs}\to\A_{\Ds}$ defined as
$$
\Psi\left(
\begin{array}{cc}
\lambda I_{\C^2} & s \\
0 & \lambda I_{\C^2}
\end{array}
\right)=\left(
\begin{array}{cc}
\lambda I_{\C^2} & \psi(s) \\
0 & \lambda I_{\C^2}
\end{array}
\right)
$$
is a unital completely bounded algebra isomorphism. We claim now that it is impossible to find two invertible elements $X,Y\in M_4(\C)$ satisfying
$$
\|XaX^{-1}\|=\|Y\Psi(a)Y^{-1}\|
$$
for every $a\in \A_{\Rs}$.

Assume to the contrary that there exist such matrices $X,Y\in M_4(\C)$. Upon multiplying on the left and on the right by unitary matrices (an operation which obviously preserves the above equality), we may assume that $X$ and $Y$ are upper triangular matrices. We write
$$
X=\left(
\begin{array}{cc}
X_{11}& X_{12} \\
0&X_{22}
\end{array}
\right),  Y=\left(
\begin{array}{cc}
Y_{11}& Y_{12}\\
0 &Y_{22}
\end{array}
\right)
$$
where $X_{11}, X_{22},Y_{11}, Y_{22}\in M_2(\C)$ are invertible upper triangular matrices. Consider now an element
$$
s=\left(
\begin{array}{cc}
z_1& z_2 \\
0 &0
\end{array}
\right)\in \Rs
$$
and put
$$
a=\left(
\begin{array}{cc}
0& s \\
0 &0
\end{array}
\right).
$$
Then,
$$
Y\Psi(a)Y^{-1}=\left(
\begin{array}{cc}
0 & Y_{11} \psi(s)Y_{22}^{-1} \\
0 &0
\end{array}
\right), XaX^{-1}=\left(
\begin{array}{cc}
0 & X_{11} s X_{22}^{-1} \\
0 &0
\end{array}
\right)
$$
and we must have
$$
\|X_{11} s X_{22}^{-1}\|=\|Y_{11}\psi(s)Y_{22}^{-1}\|
$$
for every $s\in \Rs$.
We therefore proceed to show that there exists no invertible upper triangular matrices $X_1,X_2,Y_1,Y_2\in M_2(\C)$ such that 
\begin{equation}\label{e-norm1}
\left\|
X_1\left(
\begin{array}{cc}
z_1 & z_2 \\
0&0
\end{array}
\right)X_2\right\|=
\left\| Y_1\left(
\begin{array}{cc}
z_1& 0\\
0 &z_2
\end{array}
\right)Y_2\right\|
\end{equation}
for every $z_1,z_2\in \C$. 

In order to reach a contradiction, we assume to the contrary that equation (\ref{e-norm1}) holds for every $z_1,z_2\in \C$.
Set
$$
X_1=\left(
\begin{array}{cc}
a_{11} & a_{12} \\
0& a_{22}
\end{array}
\right), X_2=\left(
\begin{array}{cc}
b_{11} & b_{12} \\
0& b_{22}
\end{array}
\right)
$$
and
$$
Y_1=\left(
\begin{array}{cc}
c_{11} & c_{12} \\
0& c_{22}
\end{array}
\right), Y_2=\left(
\begin{array}{cc}
d_{11} & d_{12} \\
0& d_{22}
\end{array}
\right)
$$
where $a_{ii},b_{ii},c_{ii},d_{ii}\neq 0$ for $i=1,2$. 
We first claim that $Y_1$ must be diagonal. Indeed, note that
$$
X_1\left(
\begin{array}{cc}
z_1 & z_2 \\
0&0
\end{array}
\right)X_2=\left(
\begin{array}{cc}
a_{11}b_{11}z_1 & a_{11}b_{12}z_1+a_{11}b_{22}z_2 \\
0&0
\end{array}
\right)
$$
and
$$
Y_1\left(
\begin{array}{cc}
z_1& 0\\
0 &z_2
\end{array}
\right)Y_2=\left(
\begin{array}{cc}
c_{11}d_{11}z_1& c_{11}d_{12}z_1+c_{12}d_{22}z_2\\
0 &c_{22}d_{22}z_2
\end{array}
\right).
$$
Therefore, Lemma \ref{l-diag} coupled with equation (\ref{e-norm1}) and the fact that $c_{11}d_{11}c_{22}d_{22}\neq 0$ shows that $c_{12}=0$.
Returning now to the main claim, by (\ref{e-betagamma}) we see that
$$
|a^2_{11}b_{12}b_{22}|\leq |c_{11}c_{12}d_{12}d_{22}|=0
$$
whence $b_{12}=0$. Moreover, we have
$$
|a_{11}b_{11}|^2=|a_{11}b_{11}|^2+|a_{11}b_{12}|^2=|c_{11}d_{11}|^2+|c_{11}d_{12}|^2
$$
and
$$
|a_{11}b_{22}|^2=|c_{12}d_{22}|^2+|c_{22}d_{22}|^2=|c_{22}d_{22}|^2
$$
by virtue of (\ref{e-gamma}) and (\ref{e-alpha}).
We compute 
$$
\left\|
X_1\left(
\begin{array}{cc}
z_1 & z_2 \\
0&0
\end{array}
\right)X_2\right\|^2=|a_{11}b_{11}|^2 |z_1|^2+|a_{11}b_{22}|^2 |z_2|^2
$$
and
\begin{align*}
&\left\|
Y_1\left(
\begin{array}{cc}
z_1& 0\\
0 &z_2
\end{array}
\right)Y_2
\right\|^2\\
&=\frac{1}{2}\left( (|c_{11}d_{11}|^2 +|c_{11}d_{12}|^2)| z_1|^2+|c_{22}d_{22}|^2|z_2|^2\right) \\
&+\frac{1}{2}\sqrt{( (|c_{11}d_{11}|^2+|c_{11}d_{12}|^2)| z_1|^2+|c_{22}d_{22}|^2|z_2|^2)^2-4|c_{11}d_{11}|^2 |z_1|^2 |c_{22}d_{22}|^2|z_2|^2 }.
\end{align*}
If $z_1=z_2=1$, then
\begin{align*}
\left\|
Y_1\left(
\begin{array}{cc}
1& 0\\
0 &1
\end{array}
\right)Y_2
\right\|^2&<  |c_{11}d_{11}|^2+|c_{11}d_{12}|^2+|c_{22}d_{22}|^2\\
&=|a_{11}b_{11}|^2+|a_{11}b_{22}|^2 \\
&=\left\|
X_1\left(
\begin{array}{cc}
1 & 1 \\
0&0
\end{array}
\right)X_2\right\|^2
\end{align*}
which is absurd. This establishes the claim, and shows that there are no invertible matrices $X,Y\in M_4(\C)$ with the property that the map 
$$
XaX^{-1}\mapsto Y\Psi(a)Y^{-1}
$$
is isometric on $\A_{\Rs}$.

\section{Quotient algebras of $H^\infty$}
In this section we give function theoretic interpretations of the various Banach and operator algebraic conditions appearing in the statements of Theorems \ref{t-semisimple}, \ref{t-unif} and \ref{t-c*} for quotients algebras of $H^\infty$ (see Theorem \ref{t-equivalencecarleson}). As mentioned in the introduction, the original motivation for the work done in this paper came from classification problems for $C_0$ contractions where these quotient algebras are of utmost importance.

We start by recalling some basic facts about the algebra $H^\infty$ of bounded
holomorphic functions on the open unit disc $\D$.
Let $H^2$ be the Hilbert space of functions
$$f(z)=\sum_{n=0}^\infty a_n z^n$$
holomorphic on the open unit disc, equipped with the norm
$$
\|f\|_{H^2}=\left(\sum_{n=0}^\infty |a_n|^2\right)^{1/2}.
$$
For any inner function $\theta\in H^\infty$, the space
$K_{\theta}=H^2\ominus \theta H^2$ is closed and invariant for $S^*$,
the adjoint of the shift operator $S$ on $H^2$. The operator
$S_{\theta}$ defined by $S_{\theta}^*=S^*|K_{\theta}$ is
called a model operator (or Jordan block). 
It is a consequence of the classical commutant lifting theorem of Sarason (see \cite{sarason1967}) that the algebra $H^\infty/\theta H^\infty$ is  isometrically isomorphic to $\{u(S_{\theta}):u\in H^\infty\}$ via the map
$$
u+\theta H^\infty \mapsto u(S_{\theta}).
$$
Using this identification, we view $H^\infty/\theta H^\infty$ as a unital operator algebra.
In particular, we have
$$
\|(u_{ij}+\theta H^\infty)\|_{M_d(H^\infty/\theta H^\infty)}=\|(u_{ij}(S_{\theta}))\|_{M_d(B(K_{\theta}))}
$$
for every $(u_{ij})\in M_d(H^\infty)$ and every $d\in \N$, while the spectrum of $u+\theta H^\infty$ coincides with that of $u(S_{\theta})$ for each $u\in H^\infty$. It is a well-known fact that the spectrum of $u(S_{\theta})$ consists of those complex numbers $\lambda$ with the property that
$$
(u-\lambda)H^\infty+\theta H^\infty\neq H^\infty
$$
which is equivalent to
$$
\inf_{z\in \D}\{|u(z)-\lambda|+|\theta(z)| \}=0
$$
by Carleson's corona theorem. A good reference for further detail on the subject is  \cite{bercovici1988}.
We now establish a characterization of quasi-nilpotent elements in $H^\infty/\theta H^\infty$. To simplify notation, we set
$$
b_{\lambda}(z)=\frac{z-\lambda}{1-\ol{\lambda}z}
$$
for every $\lambda$ in the open unit disc $\D$.

\begin{lemma}\label{l-spectheta}
Let $\theta\in H^\infty$ be an inner function and let $u\in H^\infty$ be an arbitrary function.
\begin{itemize}
\item[\rm{(i)}] The spectrum of $u+\theta H^\infty$ is $\{0\}$ if and only if given any sequence of points $\{z_n\}_n\subset \D$ satisfying $\lim_{n\to \infty}\theta(z_n)=0$ we also have $\lim_{n\to \infty}u(z_n)=0$.
\item[\rm{(ii)}] If $\theta=b_{\lambda}u$ for some $\lambda\in \D$, then the spectrum of $u+\theta H^\infty$ is contained in $\{u(\lambda),0\}$.

\end{itemize}
\end{lemma}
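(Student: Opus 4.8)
The plan is to deduce both parts from the description of the spectrum recalled just before the statement: for $u\in H^\infty$ and $\theta$ inner (and nonconstant, so that $H^\infty/\theta H^\infty$ is a nontrivial unital Banach algebra over $\C$), a scalar $\mu$ lies in the spectrum of $u+\theta H^\infty$ exactly when
$$
\inf_{z\in\D}\left(|u(z)-\mu|+|\theta(z)|\right)=0,
$$
and this spectrum is always nonempty. Thus membership of $\mu$ in the spectrum is equivalent to the existence of a sequence $\{z_n\}_n\subset\D$ with $\theta(z_n)\to 0$ and $u(z_n)\to\mu$. Everything below is a translation of this fact.

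For the reverse implication in (i), suppose $\mu$ belongs to the spectrum of $u+\theta H^\infty$ and choose $\{z_n\}_n$ as above; the hypothesis applied to this sequence forces $\mu=\lim_n u(z_n)=0$, so the spectrum is contained in $\{0\}$, and being nonempty it equals $\{0\}$. For the forward implication I would argue by contradiction: given $\{z_n\}_n\subset\D$ with $\theta(z_n)\to 0$ but $u(z_n)\not\to 0$, pass to a subsequence along which $|u(z_n)|\geq\delta$ for some $\delta>0$ and then, using $\|u\|_\infty<\infty$, to a further subsequence along which $u(z_n)$ converges to some $w$ with $|w|\geq\delta$. This last subsequence witnesses $\inf_{z\in\D}(|u(z)-w|+|\theta(z)|)=0$, so $w$ lies in the spectrum, which is $\{0\}$, contradicting $w\neq 0$.

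For (ii), let $\mu$ belong to the spectrum of $u+\theta H^\infty$ and pick $\{z_n\}_n\subset\D$ with $u(z_n)\to\mu$ and $\theta(z_n)=b_\lambda(z_n)u(z_n)\to 0$. If $\mu=0$ there is nothing to prove. Otherwise $|u(z_n)|\geq|\mu|/2$ for large $n$, hence $b_\lambda(z_n)\to 0$; since $|1-\ol\lambda z_n|\leq 1+|\lambda|$ for $z_n\in\D$, this gives $|z_n-\lambda|\leq(1+|\lambda|)\,|b_\lambda(z_n)|\to 0$, i.e. $z_n\to\lambda$ with $\lambda\in\D$. Continuity of $u$ at the interior point $\lambda$ then yields $\mu=\lim_n u(z_n)=u(\lambda)$, so the spectrum is contained in $\{u(\lambda),0\}$.

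There is no genuine difficulty here once the corona-theorem formula for the spectrum is available; the only two points that merit care are invoking nonemptiness of the spectrum to upgrade the inclusion of the spectrum in $\{0\}$ to equality in (i), and the elementary bound on $|1-\ol\lambda z_n|$ that converts $b_\lambda(z_n)\to 0$ into $z_n\to\lambda$ in (ii).
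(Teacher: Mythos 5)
Your proof is correct and takes essentially the same route as the paper's: both parts are read off from the corona-theorem description of the spectrum (cluster points of $u(z_n)$ along sequences with $\theta(z_n)\to 0$), with (ii) exploiting the factorization $\theta=b_\lambda u$ along the approximating sequence. The only cosmetic difference is that in (ii) you show a nonzero spectral value must equal $u(\lambda)$ by forcing $z_n\to\lambda$, whereas the paper shows a spectral value different from $u(\lambda)$ must vanish by forcing $z_n$ away from $\lambda$.
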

\begin{proof}
For the proof of (i), assume first that the spectrum consists only of $\{0\}$ and choose a sequence of points $\{z_n\}_n\subset \D$ with the property that $\theta(z_n)\to 0$. Then, every cluster point of the bounded sequence $\{u(z_n)\}_n$ belongs to the spectrum of $u+\theta H^\infty$, and thus $u(z_n)\to 0$. Conversely, assume $u$ has the announced property. If $\lambda$ is in the spectrum of $u+\theta H^\infty$ then there must exist a sequence $\{z_n\}_n\subset \D$ such that $\theta(z_n)\to 0$ and $u(z_n)\to \lambda$. The assumption on $u$ immediately implies that $\lambda=0$.

To show (ii), assume that $w\neq u(\lambda)$ and
$$
\inf_{z\in \D}\{|u(z)-w|+|\theta(z)|\}=0.
$$
Then, there exists a sequence $\{z_n\}_n\subset \D$ such that $\theta(z_n)\to 0$ and $u(z_n)\to w$. Since $w\neq u(\lambda)$, we see that $\{z_n\}_n$ does not converge to $\lambda$ and therefore the sequence given by $$u(z_n)=\theta(z_n)/b_{\lambda}(z_n)$$ must converge to $0$, whence $w=0$.
\end{proof}

We now proceed with the promised description of Banach and operator algebraic properties of $H^\infty/\theta H^\infty$ (see Theorems \ref{t-semisimple}, \ref{t-unif} and \ref{t-c*}) in terms of the properties of the function $\theta$. 

\begin{theorem}\label{t-equivalencecarleson}
Let $\theta\in H^\infty$ be an inner function.
\begin{enumerate}
\item[\rm{(i)}] The algebra $H^\infty/\theta H^\infty$ contains no non-trivial quasi-nilpotent elements if and only if $\theta$ is a Blaschke product with simple roots.

\item[\rm{(ii)}] The algebra $H^\infty/\theta H^\infty$ is a uniform algebra if and only if $\theta$ is an automorphism of the disc. In that case, the algebra is isomorphic to $\C$. In particular, $H^\infty/\theta H^\infty$ is a $C^*$-algebra if and only if it is a uniform algebra.

\item[\rm{(iii)}] The following statements are equivalent.
\begin{enumerate}
\item[\rm{(a)}] There exists a unital completely bounded isomorphism 
$$
\Phi: H^\infty/\theta H^\infty\to \Fil
$$
for some uniform algebra $\Fil$.

\item[\rm{(b)}] There exists a unital completely bounded isomorphism 
$$
\Phi: H^\infty/\theta H^\infty\to \Cal
$$
for some unital $C^*$-algebra $\Cal$.

\item[\rm{(c)}] the function $\theta$ is a Blaschke product whose roots $\{\lambda_n\}_n\subset \D$ satisfy the Carleson condition
$$
\inf_{n}\left\{\prod_{k\neq n}\left|\frac{\lambda_k-\lambda_n}{1-\ol{\lambda_k}\lambda_n} \right|\right\}>0.
$$
\end{enumerate}
\end{enumerate}
\end{theorem}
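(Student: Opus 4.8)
The plan is to treat the three parts in turn, using (i) as a lemma for (ii) and (iii).

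\emph{Part (i).} For the forward direction I would write $\theta=\prod_n b_{\lambda_n}$ with the $\lambda_n$ distinct, and suppose $u+\theta H^\infty$ is quasi-nilpotent. Applying Lemma \ref{l-spectheta}(i) to the constant sequences $z_j\equiv\lambda_n$ gives $u(\lambda_n)=0$ for every $n$; since the zeros are simple, $u/\theta$ is holomorphic on $\D$, and it lies in $H^\infty$ because $|u/\theta|=|u|$ a.e.\ on the circle, so $u\in\theta H^\infty$. For the converse, the key observation is that $\theta$ fails to be a Blaschke product with simple roots precisely when it has an inner divisor of the form $\psi^2$ with $\psi$ non-constant (take $\psi=b_\lambda$ at a multiple zero, and a ``square root'' of the singular inner factor otherwise). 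Writing $\theta=\psi^2\eta$, the element $u=\psi\eta$ satisfies $u^2=\theta\eta\in\theta H^\infty$ while $u\notin\theta H^\infty$ (as $u/\theta=\psi^{-1}\notin H^\infty$), so $u+\theta H^\infty$ is a non-zero nilpotent.

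\emph{Part (ii).} If $\theta$ is an automorphism then $\dim K_\theta=1$, so $H^\infty/\theta H^\infty=\C$, which is both a uniform algebra and a $C^*$-algebra. For the converse, if $H^\infty/\theta H^\infty$ is a uniform algebra then $\|a\|=r(a)$ for all $a$, hence it has no non-trivial quasi-nilpotents and (i) gives $\theta=\prod_n b_{\lambda_n}$ with distinct zeros; it then remains to rule out having two zeros $\lambda_1\ne\lambda_2$. Setting $\eta=\theta/b_{\lambda_1}$ and using that the conjugate kernel $\widetilde k_{\lambda}(z)=\theta(z)/(z-\lambda)$ is the eigenvector of $S_\theta$ at each zero $\lambda$, a short computation identifies $\eta(S_\theta)$ with $\eta(\lambda_1)R$, where $R$ is the oblique idempotent onto $\C\widetilde k_{\lambda_1}$ having $\widetilde k_{\lambda_2}$ in its kernel. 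Restricting $R$ to its invariant subspace $\operatorname{span}\{\widetilde k_{\lambda_1},\widetilde k_{\lambda_2}\}$ gives an oblique projection between two lines, and a residue computation gives $\langle\widetilde k_{\lambda_1},\widetilde k_{\lambda_2}\rangle=(1-\overline{\lambda_2}\lambda_1)^{-1}$, so the sine of their angle is $\rho(\lambda_1,\lambda_2)$ and that projection has norm $\rho(\lambda_1,\lambda_2)^{-1}$; hence $\|\eta(S_\theta)\|\ge|\eta(\lambda_1)|/\rho(\lambda_1,\lambda_2)$. On the other hand every character of $H^\infty/\theta H^\infty$ annihilates $\theta=b_{\lambda_1}\eta$ and so one of the factors, and the only one killing $b_{\lambda_1}$ is evaluation at $\lambda_1$; therefore $r(\eta+\theta H^\infty)=|\eta(\lambda_1)|$. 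Since $\rho(\lambda_1,\lambda_2)<1$ this contradicts $\|\,\cdot\,\|=r(\cdot)$. The $C^*$-algebra statement and the isomorphism with $\C$ then follow, since a commutative unital $C^*$-algebra is some $C(X)$, hence a uniform algebra.

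\emph{Part (iii).} For $(c)\Rightarrow(b)$ I would show that if $\{\lambda_n\}$ is interpolating, the evaluation map $u+\theta H^\infty\mapsto(u(\lambda_n))_n$ onto $\ell^\infty$ is a unital algebra isomorphism which is a complete contraction (restrict $(u_{ij}(S_\theta))$ to the one-dimensional invariant subspaces $\C\widetilde k_{\lambda_n}$) and whose inverse is completely bounded by the operator-valued Carleson interpolation theorem, with constant independent of the matrix size; since $\ell^\infty$ is a commutative $C^*$-algebra and a uniform algebra, this also gives $(c)\Rightarrow(a)$. For $(a)\Rightarrow(c)$ and $(b)\Rightarrow(c)$ I would argue uniformly: a uniform algebra and a commutative unital $C^*$-algebra are unital subalgebras of some $C(X)$ with no non-trivial quasi-nilpotents, so (i) forces $\theta=\prod_n b_{\lambda_n}$ with distinct zeros (and we may assume infinitely many, the Carleson condition being automatic otherwise). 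Each singleton $\{\lambda_n\}$ is clopen in the maximal ideal space $Z(\theta)$ of $H^\infty/\theta H^\infty$, because $\D$ is open in $M(H^\infty)$ and the $\lambda_n$ are isolated there, so Shilov's idempotent theorem yields an idempotent $e_n\in H^\infty/\theta H^\infty$ with $\widehat{e_n}=\mathbf{1}_{\{\lambda_n\}}$. Any representative of $e_n$, and any of its translates by $\theta H^\infty$, is a function in $H^\infty$ equal to $1$ at $\lambda_n$ and $0$ at every other zero, hence divisible by the Blaschke product $\prod_{k\ne n}b_{\lambda_k}$; this forces $\|e_n\|\ge 1/\prod_{k\ne n}\rho(\lambda_k,\lambda_n)$. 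On the other hand, writing $\phi$ for the given isomorphism onto $\Fil$ (resp.\ $\Cal$), $\phi(e_n)$ is a non-zero idempotent inside a subalgebra of some $C(X)$, so $\|\phi(e_n)\|=1$ and $\|e_n\|\le\|\phi^{-1}\|$. Combining, $\prod_{k\ne n}\rho(\lambda_k,\lambda_n)\ge\|\phi^{-1}\|^{-1}$ for every $n$, which is the Carleson condition.

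\emph{Main obstacle.} I expect the real work to lie in the forward direction of (ii) and in the lower bound $\|e_n\|\ge\prod_{k\ne n}\rho(\lambda_k,\lambda_n)^{-1}$ of (iii): both consist of converting operator-norm information on the model space $K_\theta$ into the pseudohyperbolic quantities of the Carleson condition, via the conjugate-kernel description of the eigenvectors of $S_\theta$ and an extremal argument for the relevant interpolation problem. For $(c)\Rightarrow(b)$ one must also invoke the matrix- (or operator-) valued refinement of Carleson's interpolation theorem.
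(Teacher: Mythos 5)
Your proposal is correct and covers all three parts, but at several key junctures it takes a genuinely different route from the paper. In (i) the paper does not produce a nilpotent: it writes down the explicit function $u=\prod_k b_{\lambda_k}^{p_k}s_{\nu/2}$ with $p_k=\max\{1,m_k-1\}$ and checks via Lemma \ref{l-spectheta}(i) that $u+\theta H^\infty$ is a non-trivial quasi-nilpotent; your square-zero element $\psi\eta$ with $\theta=\psi^2\eta$ is arguably cleaner and yields an honest nilpotent. In (ii) the paper avoids all reproducing-kernel computations: for a proper inner divisor $u$ of $\theta$ such that $\theta/u$ has a single root $\lambda$, it simply observes that $u(S_\theta)$ is a nonzero partial isometry, hence of norm $1$, while $r(u+\theta H^\infty)\leq |u(\lambda)|<1$ by the maximum principle; your oblique-projection estimate $\|\eta(S_\theta)\|\geq |\eta(\lambda_1)|/|b_{\lambda_1}(\lambda_2)|$ is quantitative but costs more work. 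In (iii), for (c)$\Rightarrow$(b) the paper obtains the two-sided norm estimate from Vasyunin's theorem that $\bigoplus_n \lambda_n$ is similar to $S_\theta$, using matrix-valued Carleson interpolation only for surjectivity, whereas you extract both surjectivity and the uniform lower bound from the quantitative matrix-valued interpolation theorem --- a legitimate trade of one nontrivial citation for another. For (a)$\Rightarrow$(c) the paper does not invoke Shilov's idempotent theorem: it applies the inequality $r(\cdot)\geq \|\Phi^{-1}\|^{-1}\|\cdot\|$ directly to $u_n=\prod_{k\neq n}b_{\lambda_k}$, whose class has norm $1$ and spectral radius at most $|u_n(\lambda_n)|$ by Lemma \ref{l-spectheta}(ii); your idempotents $e_n$ yield the same bound $\prod_{k\neq n}|b_{\lambda_k}(\lambda_n)|\geq \|\Phi^{-1}\|^{-1}$ but need the extra topological input that each singleton $\{\lambda_n\}$ is clopen in the maximal ideal space of the quotient. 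Both approaches are valid, and yours has the side benefit of making explicit that (a) forces $\theta$ to be a Blaschke product with simple roots via part (i), a step the paper leaves implicit.

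One small point of rigour in (i): you justify $u/\theta\in H^\infty$ by saying it is holomorphic on $\D$ with bounded boundary values. That implication is false in general (for instance $1/s_{\delta_1}$ is holomorphic on $\D$ with unimodular boundary values but unbounded). What you actually need is the Riesz factorization: since $\theta$ is a Blaschke product with simple zeros and $u$ vanishes at each of them, the Blaschke factor of $u$ is divisible by $\theta$, whence $u\in\theta H^\infty$. This is standard and easily repaired, but the one-line justification as written is not a proof.
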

\begin{proof}
For the proof of (i), note that the function $\theta$ can be factored as
$$
\theta(z)=e^{it}\prod_{k}\left(\frac{\ol{-\lambda_k}}{|\lambda_k|}\frac{z-\lambda_k}{1-\ol{\lambda_k}z} \right)^{m_k}s_{\nu}(z)
$$
where $t\in \R$, $m_k\in \N$, $\lambda_k\in \D$ and $s_{\nu}$ is the inner function associated to some singular measure $\nu$ on the unit circle. Let 
$$
u(z)=\prod_{k}\left(\frac{\ol{-\lambda_k}}{|\lambda_k|}\frac{z-\lambda_k}{1-\ol{\lambda_k}z} \right)^{p_k}s_{\nu/2}(z)
$$
where $p_k=\max\{1,m_k-1 \}$ for each $k$.
Given a sequence of points $\{z_n\}_n\subset \D$ such that $\theta(z_n)\to 0$, it is easily verified that $u(z_n)\to 0$. This observation combined with part (i) of Lemma \ref{l-spectheta} yields that the spectrum of  $u+\theta H^\infty$ reduces to the singleton $\{0\}$. Note also that $u+\theta H^\infty \neq 0$ if $m_k>1$ for some $k$ or if $\nu$ is not identically $0$. This shows that if $H^\infty/\theta H^\infty$ contains no non-trivial quasi-nilpotent elements, then each $m_k$ must be equal to $1$ and $\nu$ must be identically zero. Conversely, assume that $\theta$ is a Blaschke product with distinct roots and that the spectrum of $u+\theta H^\infty$ is $\{0\}$. By part (i) of Lemma \ref{l-spectheta} again, we see that $u(z_n)\to 0$ whenever $\theta(z_n)\to 0$. By the assumption on $\theta$, this implies that $u\in \theta H^\infty$ whence $u+\theta H^\infty=0$.

Let us now turn to proving (ii). By virtue of (i), we see that unless $\theta$ is a Blaschke product with simple roots, then $H^\infty/\theta H^\infty$ contains non-trivial quasi-nilpotent elements and hence cannot be a uniform algebra. Assume therefore that $\theta$ is such a Blaschke product. If $\theta$ is an automorphism of the disc, then $H^\infty/\theta H^\infty$ is isomorphic to $\C$, and hence is trivially a uniform algebra. On the other hand, if $\theta$ is not an automorphism of the disc, then there exists a non-constant proper inner divisor $u$ of $\theta$ with the property that $\theta/u$ has only one root, say at $\lambda$. The operator $u(S_{\theta})$ is readily verified to have norm $1$ while
$$
r(u+\theta H^\infty)\leq |u(\lambda)|
$$
by Lemma \ref{l-spectheta} (ii).
By the maximum principle, we see that 
$$
r(u+\theta H^\infty)<1=\|u(S_{\theta})\|=\|u+\theta H^\infty\|
$$
and thus $H^\infty/\theta H^\infty$ is not a uniform algebra. The last statement of (ii) now easily follows from the fact that the algebra $H^\infty/ \theta H^\infty$ is commutative.

Let us finally prove (iii).
The fact that (b) implies (a) is trivial  since the algebra $\Cal$ must commutative.
We now show that (c) implies (b). Assume therefore that $\{\lambda_n\}_n$ satisfies the Carleson condition.  A consequence of the classical Carleson interpolation theorem (see \cite{carleson1958}) is that the map
$$
M_d(H^\infty)\to M_d(\ell^\infty)
$$
$$
(u_{ij})\mapsto ( \{u_{ij}(\lambda_n)\}_n)
$$
is surjective for every $d\in \N$. Note that the kernel of this map is precisely $M_d(\theta H^\infty)$. Now, a result due to Vasyunin implies that the operator $D=\bigoplus_n \lambda_n$ is similar to $S_{\theta}$  (see \cite{vasyunin1976} for the original paper or \cite{nikolskii1979} for an English translation, or see Theorem 4.4 of \cite{clouatre2011} for this precise statement). In particular, there exists some constant $\delta>0$ such that
$$
\delta \|(u_{ij}+\theta H^\infty)\|\leq \|(u_{ij}(D))\| \leq  \|(u_{ij}+\theta H^\infty)\|
$$
for every $(u_{ij})\in M_d(H^\infty)$ and every $d\in \N$. On the other hand, via the canonical shuffle we see that $(u_{ij}(D))$ is unitarily equivalent to $\bigoplus_n (u_{ij}(\lambda_n))$, whence
$$
\|(u_{ij}(D))\|=\sup_{n}\|  (u_{ij}(\lambda_n))\|_{M_d(\C)}.
$$
This shows that the map
$$
\Phi: H^\infty/\theta H^\infty \to \ell^\infty
$$
defined by
$$
\Phi(u+\theta H^\infty)=\{u(\lambda_n)\}_n
$$
is a unital completely bounded isomorphism.

To finish the proof of (iii), it remains to show that shows that (a) implies (c). Assume that there exists a unital completely bounded isomorphism
$$
\Phi: H^\infty/\theta H^\infty \to \Fil
$$
where $\Fil$ is a uniform algebra.
It follows in particular that
\begin{align*}
r(u+\theta H^\infty)&=r(\Phi(u+\theta H^\infty))=\|\Phi(u+\theta H^\infty)\|\\
&\geq \frac{1}{\|\Phi^{-1}\|} \|u+\theta H^\infty\|
\end{align*}
for every $u\in H^\infty$. For each $n\in\N$, let 
$$
u_n(z)=\prod_{k\neq n}\frac{\ol{-\lambda_k}}{|\lambda_k|}\frac{z-\lambda_k}{1-\ol{\lambda_k}z}.
$$
We must show that 
$$
\inf_n|u_{n}(\lambda_n)|>0.
$$ 
By part (ii) of Lemma \ref{l-spectheta}, we have that the spectrum of $u_n+\theta H^\infty $ is contained in $\{u_n(\lambda_n),0\}$, which in turn implies that
$$
r(u_n+\theta H^\infty)\leq |u_n(\lambda_n)|.
$$
On the other hand, the operator $u_n(S_{\theta})$ has norm $1$ as mentioned earlier, whence
\begin{align*}
|u_n(\lambda_n)|&\geq r(u_n+\theta H^\infty)\geq  \frac{1}{\|\Phi^{-1}\|} \|u_n+\theta H^\infty\|\\
&= \frac{1}{\|\Phi^{-1}\|}\|u_n(S_{\theta})\|= \frac{1}{\|\Phi^{-1}\|}
\end{align*}
and (ii) follows.
\end{proof}

Part (iii) of this theorem should be compared with the assumptions of Theorems \ref{t-unif} and \ref{t-c*}. Moreover, it is interesting to note that it offers yet another equivalent formulation of the classical Carleson condition, which to this day is still a ubiquitous notion in function theory.

\section{Possible extensions of our results}
We close this paper by mentioning various other sufficient conditions for the conclusions of Theorems \ref{t-semisimple}, \ref{t-unif} and  \ref{t-c*} to hold. This section contains no results proper, but we hope that the ideas described therein may eventually lead to enlarging the family of algebras to which those theorems apply. Moreover, the conditions we exhibit here share some connections with operator algebraic properties studied by other authors.

The first condition we wish to discuss is a particular kind of symmetry between a unital operator algebra $\A\subset B(\hil)$ and its ``adjoint'' $\A^*=\{a^*:a\in \A\}$. First recall that given a linear map $\theta$ acting between operator algebras, there is an associated linear map $\theta^*$ defined as $\theta^*(t^*)=\theta(t)^*$.  Now, we assume that $\A$ has the following property: whenever $\theta$ is a unital completely bounded homomorphism into $\A$, then  $\theta$ and $\theta^*$ are \emph{simultaneously} similar to completely contractive maps. In other words, there exists a single invertible operator $Z\in B(\hil)$ such that both the maps
$$
t\mapsto Z\theta(t)Z^{-1} \text{ and } t\mapsto Z\theta^*(t)Z^{-1}
$$
are completely contractive. 

The conclusion of Theorems \ref{t-unif} and \ref{t-c*} holds under this condition on $\A$, and we sketch the details of the proof. We refer the reader to the original statements found in Section 2 for the notation used here. Start by applying Theorem \ref{t-paulsensim} to find an invertible operator $Y$ such that
$$
\|(Y \phi(a_{ij}) Y^{-1})\|\leq \|(a_{ij})\|
$$
for every $(a_{ij})\in M_d(\A)$ and every $d\in \N$. Define $\theta$ to be the unital completely bounded homomorphism 
$$
Y \phi(a)Y^{-1}\mapsto a.
$$
By assumption on $\A$, we can find an invertible operator $Z$ such that
$$
\|(Z a_{ij}Z^{-1})\|\leq \|(Y \phi(a_{ij}) Y^{-1})\|
$$
and
$$
\|(Z a^*_{ji}Z^{-1})\|\leq \|(Y \phi(a_{ij}) Y^{-1})\|
$$
for every $(a_{ij})\in M_d(\A)$ and every $d\in \N$. Note now that
\begin{align*}
\|(a_{ij})\|^2& =r((a_{ij})^*(a_{ij}))\\
&\leq \|(Z\oplus\ldots \oplus Z) (a_{ij})^*(a_{ij}) (Z^{-1}\oplus\ldots \oplus Z^{-1})\|\\
&\leq \|(Za_{ji}^*Z^{-1})\| \|(Za_{ij}Z^{-1})\|\\
&\leq \|(Y \phi(a_{ij}) Y^{-1})\| ^2\\
&\leq \|(a_{ij})\|^2
\end{align*}
whence 
$$
\|(Y \phi(a_{ij}) Y^{-1})\|= \|(a_{ij})\|
$$
for every $(a_{ij})\in M_d(\A)$ and every $d\in \N$. We thus obtain a stronger result in this case since the map
$$
a\mapsto Y\phi(a)Y^{-1}
$$
is already completely isometric (in other words, we may choose $X$ to be the identity operator here). 

Unfortunately, this symmetry property seems rather mysterious and difficult to satisfy. For instance, Pisier has exhibited in \cite{pisier1998} an example of a pair of commuting operators $(T_1,T_2)$ on Hilbert space which are both similar to a contraction, but are not simultaneously similar to contractions. This illustrates the subtleties involved in such joint similarity problems.

We finish with another condition one can impose on the algebra $\A$ which is sufficient for a sharper version of the second statement of Theorem \ref{t-semisimple} to hold. Let us say
that an operator algebra $\A\subset B(\hil)$ has the \textit{conjugation with lower bound property} (CLBP) if there exits a constant $0<\delta\leq1$ such that 
$$
\|a\mapsto XaX^{-1}\|_{cb}\geq \delta \|X\| \|X^{-1}\|
$$
for every invertible $X\in B(\hil)$. This notion resembles that of ultraprimeness for general Banach algebras introduced in \cite{mathieu1989}, which has attracted some interest in past years (see \cite{cabrera1997}, \cite{runde2002} and \cite{strasek2005} for instance). The difference here is that we do not assume that the element $X$ lies in the algebra $\A$, and we restrict our attention to conjugation operators rather than general multiplication operators $M_{X,Y}(a)=XaY$. 

If $\A$ satisfies the CLBP, then the second statement in Theorem \ref{t-semisimple} holds for \emph{any} finite subset $\A_0\subset \A$, regardless of whether or not it contains non-trivial quasi-nilpotent elements. We briefly indicate how this can be done. We use the notation established in the proof found in Section 2. By construction of the sequence of invertible operators $\{X_n\}_n$, we have that
$$
\|a\mapsto X_n a X_n^{-1}\|_{cb}\leq 1
$$
for every $n\in \N$, 
and thus by the CLBP we have that
$$
\|X_n\| \|X_n^{-1}\|\leq \delta^{-1}
$$
for every $n\in \N$. Therefore, we have the estimate
$$
\|X_n a X_n^{-1}\|\geq \delta \|a\|
$$
for every $a\in \A$ and every $n\in \N$. This inequality makes it unnecessary to consider spectral radii in this approach, hence removing the need for the absence of quasi-nilpotent elements.

On the other hand, the CLBP is rather difficult to verify and concrete examples do not seem easy to come by so the range of applicability of the observations above seems limited at the moment. It is easy to see however that any algebra containing the rank-one operators has the CLBP and in fact 
$$
\|a\mapsto XaX^{-1}\|_{cb}=\|X\| \|X^{-1}\|
$$
in that case. Consequently, the sharper version of Theorem \ref{t-semisimple} certainly holds whenever the algebra $\A$ contains all finite rank operators.

\bibliography{/home/raphael/Dropbox/Research/biblio}
\bibliographystyle{plain}

\end{document}